\newtheorem{theorem}{Theorem}[section]
\newtheorem{lemma}[theorem]{Lemma}
\newtheorem*{lemma*}{Generalized Klingenberg's Lemma}
\newtheorem*{claim*}{Claim}
\newtheorem{corollary}[theorem]{Corollary}
\newtheorem{proposition}[theorem]{Proposition}
\theoremstyle{definition}
\newtheorem{example}[theorem]{Example}
\theoremstyle{remark}
\newtheorem{remark}[theorem]{Remark}
\numberwithin{equation}{section}
\begin{document}
	\title[Convexity Radius and Injectivity Radius decay]{Local Estimate on Convexity Radius and Decay of Injectivity Radius in a Riemannian manifold}
	
	
	\author{Shicheng Xu}
	\address{School of Mathematical Sciences, Capital Normal University, Beijing, 100048, P.R.C.}
	\email{shichengxu@gmail.com}
	
	
	\subjclass[2010]{Primary 53C22, Secondary 53C20}
	
	\keywords{focal radius, convexity radius, decay of injectivity radius}
	
	\date{April 9, 2017}
	
	
	\begin{abstract}
		In this paper we prove the following pointwise and curvature-free estimates on convexity radius, injectivity radius and local behavior of geodesics in a complete Riemannian manifold $M$:
		\begin{enumerate}
			\item the convexity radius of $p$, $\operatorname{conv}(p)\ge \min\{\frac{1}{2}\operatorname{inj}(p),\operatorname{foc}(B_{\operatorname{inj}(p)}(p))\}$, where $\operatorname{inj}(p)$ is the injectivity radius of $p$ and $\operatorname{foc}(B_r(p))$ is the focal radius of open ball centered at $p$ with radius $r$;
			\item for any two points $p,q$ in $M$,
			$\operatorname{inj}(q)\ge \min\{\operatorname{inj}(p), \operatorname{conj}(q)\}-d(p,q),$ where $\operatorname{conj}(q)$ is the conjugate radius of $q$;
			\item for any  $0<r<\min\{\operatorname{inj}(p),\frac{1}{2}\operatorname{conj}(B_{\operatorname{inj}(p)}(p))\}$, any (not necessarily minimizing) geodesic in $B_r(p)$ has length $\le 2r$.
		\end{enumerate}
		We also clarify two different concepts on convexity radius and give examples to illustrate that the one more frequently used in literature is not continuous.
	\end{abstract}
	\maketitle
	\section{Introduction}
	Let $M^n$ be a complete Riemannian manifold of dimension $n$ without boundary. The \emph{injectivity radius} of a point $p\in M$, $\operatorname{inj}(p)$, is defined to be the supremum of radius of open metric balls centered at $p$ that contains no cut points of $p$. The \emph{conjugate radius} of $p$, $\operatorname{conj}(p)$, is defined as the supremum of the radius of open balls centered at the origin of the tangent space $T_pM$ which contains no critical point of exponential map $\exp_p:T_pM\to M$.
	An open set $U\subset M$ is called \emph{convex}, if any two points in $U$ are joined by a unique minimal geodesic in $M$ and its image lies in $U$.
	Following Klingenberg (\cite{Kl}), we call an open ball $B_r(p)$ \emph{strongly convex}\footnote{The terminology here is different from that in \cite{Alexander1978Local, IsaacChavel2006Riemannian}, where the ``strongly convex" coincides with the ``convex" used in this paper for open sets.}, if any $B_s(q)\subset B_r(p)$ is convex.
	The \emph{convexity radius} of $p$ (\cite{Kl}), $\operatorname{conv}(p)$, is defined  to be the supremum of radius of strongly convex open balls centered at $p$. 
	The injectivity radius of $M$ is defined by
	$\operatorname{inj}(M)=\inf_{p\in M}\operatorname{inj}(p)$.  The conjugate radius of $M$, $\operatorname{conj}(M)$ and the convexity radius of $M$, $\operatorname{conv}(M)$ are defined similarly as $\operatorname{inj}(M)$.
		
	It is a classical result by Whitehead (\cite{Wh}) that $\operatorname{conv}(p)>0$. Let $C$ be a compact set containing  $B_r(p)$, and let $K$ be the supremum of sectional curvature on $C$, then the convexity radius of $p$ satisfies the following lower bound (cf. \cite{Wh,ChEb,Kl}),
	\begin{equation}\label{ineq-conv-classical-curv}
	\operatorname{conv}(p)\ge \min\{r, \frac{\pi}{2\sqrt{K}}, \frac{1}{2}\operatorname{inj}(C)\},
	\end{equation} 
	where $\frac{\pi}{\sqrt{K}}$ is viewed as $\infty$ for $K\le 0$.
	
	Our first main result in this paper is a new estimate on convexity radius, which is local, curvature-free and improves Whitehead's theorem. Let the \emph{focal radius} of $p$ (\cite{Dib}), denoted by $\operatorname{foc}(p)$, be defined as the supremum of the radius of open balls centered at the origin of $T_pM$ such that for each tangent vector $v$ in the ball and any normal Jacobi field $J$ along the radial geodesic $\exp_ptv$ with $J(0)=0$, $\frac{d}{dt}|J(t)|> 0$ $(0<t\le 1)$.  It is the largest radius of balls on the tangent space of $p$ such that the distance function to the origin remains strictly convex with respect to the pullback metric (see \cite{Dib} or Lemma 2.4).
\begin{theorem}\label{thm-conv-lowerbound}
	Let $U=B_{\operatorname{inj}(p)}(p)$ be the open ball centered at $p$ of radius $\operatorname{inj}(p)$. Then for any $r\le \min\{\operatorname{foc}(U), \frac{1}{2}\operatorname{inj}(p)\}$, any open ball $B_s(q)$ contained in $B_r(p)$ is convex, i.e,
	\begin{equation}\label{ineq-conv-foc}
	\begin{aligned}
	\operatorname{conv}(p)
	&\ge \min\{\operatorname{foc}(U), \frac{1}{2}\operatorname{inj}(p)\}\\
	&= \min\{\operatorname{foc}(U), \frac{1}{4}l(p)\}
	\end{aligned}
	\end{equation}
	where $l(p)$ is the length of a shortest non-trivial geodesic loop at $p$.
\end{theorem}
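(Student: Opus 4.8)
The plan is to prove the displayed boxed claim — that for $r\le\min\{\operatorname{foc}(U),\tfrac12\operatorname{inj}(p)\}$ every open ball $B_s(q)\subseteq B_r(p)$ is convex — from which the lower bound on $\operatorname{conv}(p)$ is immediate; the reformulation in terms of $l(p)$ is a separate elementary point handled at the end. \emph{Step 1 (localize to the normal ball of $p$ and pull back).} Since $2r\le\operatorname{inj}(p)$, the ball $U=B_{\operatorname{inj}(p)}(p)$ is a normal ball and $\exp_p\colon\tilde U:=B_{\operatorname{inj}(p)}(0)\subseteq T_pM\to U$ is a diffeomorphism. If $x,y\in B_s(q)\subseteq B_r(p)$ and $\sigma$ is any minimal geodesic from $x$ to $y$, then for every point $m$ on $\sigma$ the triangle inequality gives $d(p,m)\le\tfrac12\bigl(d(p,x)+d(p,y)+d(x,y)\bigr)<2r\le\operatorname{inj}(p)$, so $\sigma\subseteq U$; hence all geodesics and distances relevant to the statement live inside $U$, and one may replace $M$ by the (incomplete) Riemannian manifold $(\tilde U,\tilde g)$, $\tilde g=\exp_p^{*}g$. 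On $\tilde U$ the radial distance $d_{\tilde g}(0,\cdot)$ is the Euclidean norm $|\cdot|$, and for every $z\in U$ one has $\operatorname{foc}(z)\ge\operatorname{foc}(U)\ge r$; combining this with Lemma 2.4, for each $\tilde z$ with $|\tilde z|<r$ the function $d_{\tilde g}(\tilde z,\cdot)^{2}$ is smooth and strictly $\tilde g$-convex on the $\tilde g$-ball about $\tilde z$ of radius $\min\{\operatorname{foc}(z),\operatorname{inj}(p)-|\tilde z|\}\ge r$.

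\emph{Step 2 (the convexity mechanism, modulo confinement).} Granting that a minimal geodesic $\sigma$ joining $x,y\in B_s(q)$ never leaves the region where $d(q,\cdot)^{2}$ is strictly convex, $B_s(q)$ is convex for two standard reasons. \emph{Containment}: if $\sigma$ left $B_s(q)$, the function $t\mapsto d(q,\sigma(t))$ would attain an interior maximum at some $m_*$ with $d(q,m_*)\ge s$, where the minimal geodesic to $q$ meets $\sigma$ orthogonally and, because $d(q,m_*)<\operatorname{foc}(q)$, the Hessian of $d(q,\cdot)^{2}$ is positive definite; hence $d(q,\cdot)^{2}\circ\sigma$ has a strict interior \emph{minimum} at $m_*$, a contradiction. \emph{Uniqueness}: $h=d(x,\cdot)+d(\cdot,y)$ is strictly convex on the relevant region while $\{h=d(x,y)\}$ is exactly the union of minimal geodesics from $x$ to $y$, so that union is a single geodesic.

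\emph{Step 3 (the main obstacle: confining the geodesic).} This is the crux, where I expect the real work to be. The triangle inequality only confines $\sigma$ to $B_{2s}(q)$, whereas the focal-radius hypothesis only yields strict convexity of $d(q,\cdot)^{2}$ out to radius $\operatorname{foc}(q)\ge r$, possibly smaller than $2s$. I would bridge this by a continuity/induction in the radius: the set of $\rho\le r$ for which \emph{every} sub-ball of $B_\rho(p)$ is convex is non-empty (Whitehead) and has the property that if it contains $(0,\rho_1)$ it contains $\rho_1$ — for the endpoints of a minimal geodesic joining two points of an open ball already lie in a strictly smaller concentric ball, which by the inductive hypothesis is convex, so the geodesic cannot reach the boundary. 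Thus this set is an interval $(0,\rho^{*}]$, and to show $\rho^{*}\ge\min\{\operatorname{foc}(U),\tfrac12\operatorname{inj}(p)\}$ one examines, via a compactness argument, how convexity first degenerates as balls are enlarged past $\rho^{*}$: the limiting sub-ball is still convex (its parameters satisfy $s_0+d(p,q_0)=\rho^{*}$), so the degeneration is marginal, and the only possibilities are (i) a minimal geodesic tangent from inside to some sub-ball's boundary sphere at a point at distance exactly the radius — excluded by the strict-convexity argument of Step 2, that radius being $<\operatorname{foc}$ — or (ii) the sudden appearance of a second minimal geodesic between two nearby points — excluded by the Generalized Klingenberg's Lemma, which turns two distinct minimal geodesics into a geodesic loop whose length is controlled and contradicts $r\le\tfrac12\operatorname{inj}(p)$ (here one also invokes the injectivity-radius decay estimate, item (2) of the introduction, to keep $\operatorname{inj}$ under control near $p$).

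\emph{Step 4 (the reformulation).} Finally $\min\{\operatorname{foc}(U),\tfrac12\operatorname{inj}(p)\}=\min\{\operatorname{foc}(U),\tfrac14 l(p)\}$ follows from Klingenberg's relation $\operatorname{inj}(p)=\min\{\operatorname{conj}(p),\tfrac12 l(p)\}$: when $\operatorname{inj}(p)<\operatorname{conj}(p)$ one has $\tfrac12\operatorname{inj}(p)=\tfrac14 l(p)$ directly, and when $\operatorname{inj}(p)=\operatorname{conj}(p)$ one verifies $\operatorname{foc}(U)\le\tfrac12\operatorname{conj}(p)$ by estimating $\operatorname{foc}(z)$ at the point $z\in U$ lying at half the distance from $p$ to a conjugate point of $p$ on $\partial U$, which makes both sides equal to $\operatorname{foc}(U)$.
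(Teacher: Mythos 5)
Your overall architecture --- pull back by $\exp_p$, separate the ``convexity mechanism'' from the ``confinement'' problem, and resolve the latter with the Generalized Klingenberg's Lemma --- correctly identifies both the difficulty and the key tool, but Step 3, which you yourself flag as the crux, is not a proof. For contrast, the paper's route is much shorter: by Proposition \ref{prop-conv-focal-cutdecay} the whole theorem reduces to the single statement $R_c(p)\ge r$, i.e.\ that no $q\in B_r(p)$ has a cut point in $B_r(p)$; the confinement issue you wrestle with is absorbed there into an open--closed argument using convexity of the distance to the ball's \emph{own center}, which never needs to know in advance where a minimal geodesic goes. One then assumes a cut point of $q$ in $B_r(p)$, takes a minimizer $x_0$ of the perimeter function $P_{p;q}=d(p,\cdot)+d(\cdot,q)+d(p,q)$ on $\operatorname{Cut}(q)$, notes $d(p,x_0)<2r$ and $d(q,x_0)<2r$, and lets the Generalized Klingenberg's Lemma produce a geodesic from $p$ through $x_0$ to $q$ lying in $B_{2r}(p)\subset B_{\operatorname{inj}(p)}(p)$, which is the contradiction.

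Two concrete gaps in your Step 3. First, the case analysis at the critical radius $\rho^{*}$ --- that convexity can only degenerate via (i) an interior tangency or (ii) the sudden appearance of a second minimal geodesic --- is asserted, not proved: minimal geodesics jump discontinuously exactly at cut points, and nothing in your setup locates such a jump or shows that a limiting configuration of one of these two types exists (nor that ``the limiting sub-ball is still convex''). Second, and more seriously, the Generalized Klingenberg's Lemma carries a non-conjugacy hypothesis: it applies only when the perimeter-minimizing cut point $x_0$ is not conjugate to $q$. You never introduce the perimeter function, never bound $d(q,x_0)$, and never verify non-conjugacy; the paper does this via $\operatorname{foc}(q)+\operatorname{foc}(x_0)\le d(q,x_0)<2r\le 2\operatorname{foc}(U)$, which is exactly where the hypothesis $r\le\operatorname{foc}(U)$ (rather than $r\le\operatorname{foc}(p)$) is needed a second time. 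Your parenthetical appeal to the injectivity-radius decay estimate is a symptom of this: the contradiction needs no such input, only that every point of the loop through $x_0$ lies within $\tfrac12 P_{p;q}(x_0)<2r\le\operatorname{inj}(p)$ of $p$. Smaller issues: the uniqueness argument in Step 2 fails as stated, since $h=d(x,\cdot)+d(\cdot,y)$ is constant (hence not strictly convex) along any minimal geodesic from $x$ to $y$ --- uniqueness really comes from the absence of cut points, which is the statement to be proved; and in Step 4 the \emph{midpoint} of the geodesic to a first conjugate point need not have focal radius $\le\tfrac12\operatorname{conj}(p)$ --- what is true is that the focal radii at the two endpoints of that geodesic sum to at most its length.
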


For simplicity in the remaining of the paper we use $K$ to denote the supremum of sectional curvature on $M$. Because $\operatorname{foc}(U)\ge \frac{\pi}{2\sqrt{K}}$, it is clear that (\ref{ineq-conv-foc}) implies (\ref{ineq-conv-classical-curv}). 
The equality in (\ref{ineq-conv-foc}) holds for all Riemannian homogeneous space, but (\ref{ineq-conv-foc}) may be strict on a locally symmetric space (see Example \ref{ex-homog}).

Let us recall the traditional method (see \cite{Wh, ChEb,Pe16,Karcher1970}) to find a convex neighborhood of $p$ on $M$, whose first step is to choose an open set $U$ around $p$ such that $U$ is contained in ``normal coordinates" of any point $x\in U$, or equivalently, $U$ contains no cut point of any point $x\in U$. Then any smaller neighborhood $W\subset U$ would be strongly convex once the distance function $d(\cdot,x)$ is strictly convex for any $x\in W$. Though the convexity of distance function can be guaranteed through either the focal radius or the upper sectional curvature bound $K$, the size of $U$ and hence $W$ cannot be explicitly determined unless a decay rate of injectivity radius (cf. \cite{ChLiYau,Cheeger1982}) nearby is known. In the proof of Theorem \ref{thm-conv-lowerbound}, we will show that no cut points occur in $B_{\frac{1}{2}\operatorname{inj}(p)}(p)$ for any $x$ around $p$ in distance $d(x,p)<\frac{1}{2}\operatorname{conj}(x)$. 

The second main result in this paper is that the injectivity radius admits Lipschitz decay near $p$ if $\operatorname{inj}(p)<\infty$ and is not realized by its conjugate radius.

\begin{theorem}\label{thm-inj-lipdecay} For any $p,q$ in a complete Riemannian manifold,
	\begin{equation}\label{ineq-aroundinj}
	\operatorname{inj}(q)\ge \min\{\operatorname{inj}(p), \operatorname{conj}(q)\}-d(p,q).
	\end{equation}
	In particular, if $M$ contains no conjugate points, then $\operatorname{inj}:M\to \mathbb R$ is $1$-Lipschitz, i.e.,
	$$\left|\operatorname{inj}(p)-\operatorname{inj}(q)\right|\le d(p,q).$$
\end{theorem}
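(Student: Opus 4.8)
The plan is to argue by contradiction after a couple of harmless reductions, using Klingenberg's lemma to produce a geodesic loop at $q$ and then transporting it to a short geodesic loop at $p$. The inequality holds automatically when $d(p,q)\ge\min\{\operatorname{inj}(p),\operatorname{conj}(q)\}$, and since a geodesic stops being minimizing beyond a conjugate point one always has $\operatorname{inj}(q)\le\operatorname{conj}(q)$; if $\operatorname{inj}(q)=\operatorname{conj}(q)$ (in particular if $\operatorname{inj}(q)=\infty$) the claim is immediate. So I may assume $d:=d(p,q)<\min\{\operatorname{inj}(p),\operatorname{conj}(q)\}$ and $L:=\operatorname{inj}(q)<\operatorname{conj}(q)$, and suppose toward a contradiction that $L+d<\operatorname{inj}(p)$ and $L+d<\operatorname{conj}(q)$. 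First I would apply Klingenberg's lemma to a closest cut point $m$ of $q$: since $d(q,m)=L<\operatorname{conj}(q)$ the point $m$ is not conjugate to $q$, so there are exactly two minimizing geodesics $\sigma_1,\sigma_2$ from $q$ to $m$, of length $L$, with $\dot\sigma_1(L)=-\dot\sigma_2(L)$; equivalently $\gamma:=\sigma_1\ast\sigma_2^{-1}$ is a geodesic loop at $q$ of length $2L$, smooth at $m$. The cheap but decisive observation is that $\gamma$ stays inside $B_{\operatorname{inj}(p)}(p)$, because any point $x$ on $\gamma$ satisfies $d(p,x)\le d+d(q,x)\le d+L<\operatorname{inj}(p)$.

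From here the goal is to manufacture a geodesic loop at $p$ of length at most $2L+2d$; combined with $\operatorname{inj}(p)\le\tfrac12 l(p)$ (Klingenberg again, $l(p)$ the length of a shortest geodesic loop at $p$, which is now finite) this forces $\operatorname{inj}(p)\le L+d$, the desired contradiction. When $\gamma$ is not null-homotopic this is easy: pass to the complete universal cover $\widetilde M\to M$, choose a lift $\widetilde q$ of $q$, let $\widetilde\gamma$ be the lift of $\gamma$ starting at $\widetilde q$ (a geodesic from $\widetilde q$ to $g\widetilde q$, where $g\in\pi_1(M,q)\setminus\{e\}$ is the class of $\gamma$) and $\widetilde p$ the lift along $\sigma$ of $p$, so $d_{\widetilde M}(\widetilde q,\widetilde p)=d$. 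Since $\operatorname{length}(\widetilde\gamma)=2L$ while $d_{\widetilde M}(\widetilde q,g\widetilde q)$ is the length of a geodesic loop at $q$ in the class $g\ne e$, hence at least $l(q)=2L$, one gets $d_{\widetilde M}(\widetilde q,g\widetilde q)=2L$; concatenating the lift of $\sigma^{-1}$, then $\widetilde\gamma$, then $g\cdot(\text{lift of }\sigma)$ yields a path from $\widetilde p$ to $g\widetilde p$ of length $\le 2L+2d$, and projecting down gives $l(p)\le 2L+2d$.

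The delicate case — and the step I expect to be the main obstacle — is when $\gamma$ is contractible, which genuinely occurs (the shortest geodesic loop at a point on a thin neck of a $2$--sphere is null-homotopic): the universal cover then contributes nothing, since there all loops are contractible, and the contradiction has to come from the local geometry near $q$. This is exactly where the hypothesis $L+d<\operatorname{conj}(q)$ is used: it makes $\exp_q$ an immersion on $B_{L+d}(0)\subset T_qM$, so that $\widehat U:=(B_{L+d}(0),\exp_q^{\ast}g)$ is a genuine simply connected normal ball centered at the origin, and because $\gamma$ stays in $B_{\operatorname{inj}(p)}(p)$ the composition $\exp_p^{-1}\circ\exp_q$ is defined and a local isometry near the development of $\gamma$. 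The plan is to develop $\gamma$ and $\sigma$ into $\widehat U$ and run a Klingenberg--type corner--cutting argument there — using the two preimages of $m$ under $\exp_q$ and the preimage $d\omega$ of $p$ — to extract a geodesic loop based at $p$ of length $\le 2L+2d$; packaging this is precisely what a ``Generalized Klingenberg's Lemma'' is meant to do, and pinning down how the conjugate radius of $q$ controls the shortening process (so that the produced loop is genuine and not degenerate) is the technical heart of the proof.

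Finally, the last assertion is immediate from the main inequality: if $M$ has no conjugate points then $\operatorname{conj}(q)=\infty$ for every $q$, so the bound reads $\operatorname{inj}(q)\ge\operatorname{inj}(p)-d(p,q)$, and interchanging $p$ and $q$ gives $|\operatorname{inj}(p)-\operatorname{inj}(q)|\le d(p,q)$.
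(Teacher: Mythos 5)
Your reductions, the application of the classical Klingenberg lemma at $q$, and the observation that the resulting loop $\gamma$ of length $2L$ stays inside $B_{\operatorname{inj}(p)}(p)$ are all sound, and the universal-cover argument does dispose of the case where $\gamma$ is not null-homotopic. But the contractible case, which you correctly identify as the real issue and which genuinely occurs, is left open: what you offer there is a plan (``develop into $(B_{L+d}(0),\exp_q^{\ast}g)$ and run a Klingenberg-type corner-cutting argument''), not an argument, and you say yourself that controlling the shortening process is ``the technical heart of the proof.'' That heart is missing. The difficulty is real: the broken loop $\sigma^{-1}\ast\gamma\ast\sigma$ at $p$ has corners at $q$, and in the contractible case any shortening applied to it may simply collapse it to a point, so no nontrivial geodesic loop at $p$ of length $\le 2L+2d$ is produced and the intended contradiction with $l(p)\ge 2\operatorname{inj}(p)$ never materializes.

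The paper closes exactly this gap by changing the base point of the Klingenberg argument. Instead of taking the nearest cut point $m$ of $q$ and the loop it generates, one minimizes the perimeter function $P_{p;q}(x)=d(p,x)+d(x,q)+d(p,q)$ over $\operatorname{Cut}(q)$. The minimizer $x_0$ satisfies $d(q,x_0)\le\tfrac12 P_{p;q}(x_0)\le L+d<\operatorname{conj}(q)$, hence is not conjugate to $q$, and the Generalized Klingenberg Lemma (Lemma 3.3 of the paper) then asserts that every minimal geodesic from $p$ to $x_0$ extends past $x_0$ as a minimal geodesic from $x_0$ to $q$. This directly produces a second geodesic from $p$ to $q$ all of whose points lie within distance $\tfrac12 P_{p;q}(x_0)<\operatorname{inj}(p)$ of $p$, a contradiction --- with no homotopy dichotomy, no loop based at $p$, and no shortening process. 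If you wanted to complete your proof along your own lines you would essentially have to reprove this lemma inside the developed ball; the idea you are missing is that it is the perimeter-minimizing cut point, not the nearest cut point of $q$, at which the minimal geodesics to $p$ and to $q$ are forced to match up into smooth through-geodesics.
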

It is well-known that $\operatorname{inj}(p)$ is continuous in $p$ (\cite{Kl}). And it is easy to see that $\operatorname{conj}(p)$ is continuous but not Lipschitz in $p$ in general (for example, points in a paraboloid of revolution).
\begin{remark}
	Theorem \ref{thm-conv-lowerbound} and \ref{thm-inj-lipdecay} are inspired by a recent paper \cite{Mei} by Mei, where the following lower bound on the convexity radius\footnote{It should be pointed out that convexity radius in \cite{Mei} is the concentric convexity radius defined later in this paper, but the proof in \cite{Mei} actually implies the same lower bound for convexity radius.} of $p$ and the decay of injectivity radius around $p$ were proved,
	\begin{align}
	\operatorname{conv}(p)&\ge \frac{1}{2}\min\{\frac{\pi}{\sqrt{K}}, \operatorname{inj}(p)\},\label{ineq-conv-curv}\\
	\operatorname{inj}(q)&\ge \min\{\operatorname{inj}(p), \frac{\pi}{\sqrt{K}}\}-d(p,q).\label{ineq-aroundinj-curv}
	\end{align}
	A significant improvement of (\ref{ineq-conv-curv}) from the classical local estimate  (\ref{ineq-conv-classical-curv}) on convexity radius is that no injectivity radii of nearby points around $p$ are involved.
	In Mei's proofs of (\ref{ineq-conv-curv}) and (\ref{ineq-aroundinj-curv}),
	curvature comparison with respect to $K$ is applied as the
	first step to avoid minimal geodesics going into places where $d(\cdot,p)$ is not convex (see Lemma 2.3 and Theorem 3.1 in \cite{Mei}).
	Since Theorem \ref{thm-conv-lowerbound} and Theorem \ref{thm-inj-lipdecay} are free of curvature, they cannot follow from his arguments. Moreover, according to \cite{Gulliver1975On}, there are manifolds without focal/conjugate points whose sectional curvature changes sign. Therefore (\ref{ineq-conv-foc}) (resp.  (\ref{ineq-aroundinj})) improves (\ref{ineq-conv-curv}) (resp. (\ref{ineq-aroundinj-curv})).
\end{remark}

\begin{remark}
	Both the decay rate of injectivity radius by Cheeger-Gromov-Taylor (Theorem 4.7 in \cite{Cheeger1982}) and by Cheng-Li-Yau (Theorem 1 in \cite{ChLiYau}) require the absence of conjugate points and depend crucially on the lower bound of the Ricci or sectional curvature on $M$ respectively. Hence they fail to apply as the lower bound of Ricci curvature goes to $-\infty$. However, by (\ref{ineq-aroundinj}) injectivity radius always admits Lipschitz decay rate inside $B_{\operatorname{inj}(p)}(p)$ in absence of conjugate points. In general (\ref{ineq-aroundinj}) is sharp in the sense that for any $\epsilon>0$, there are counterexamples such that 
	$$\operatorname{inj}(q)\ge \min\{\operatorname{inj}(p), \operatorname{conj}(q)\}-(1-\epsilon)d(p,q)$$
	does not hold (see Example \ref{ex-lip-sharp}).
\end{remark}
By (\ref{ineq-conv-classical-curv}) the convexity radius of $M$ admits the following lower bound,
\begin{equation}\label{ineq-conv-global-curv}
\operatorname{conv}(M)\ge \frac{1}{2}\min\{\frac{\pi}{\sqrt{K}}, \operatorname{inj}(M)\}.
\end{equation}
Let the focal radius of $M$, $\operatorname{foc}(M)$, be defined similarly as the injectivity radius of $M$.
Riemannian Manifolds with infinity focal radius are first studied in \cite{Sul}. 
 Recently Dibble improved (\ref{ineq-conv-global-curv}) to
the following equality. 
\begin{corollary}[\cite{Dib}]\label{cor-Dib-global}
	\begin{equation}\label{eq-conv-global}
	\begin{aligned}
	\operatorname{conv}(M)&=\min\{\operatorname{foc}(M),\frac{1}{2}\operatorname{inj}(M)\}
	\end{aligned}
	\end{equation}
\end{corollary}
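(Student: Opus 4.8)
The plan is to prove the two inequalities in (\ref{eq-conv-global}) separately, obtaining ``$\ge$'' from the pointwise estimate of Theorem~\ref{thm-conv-lowerbound} and ``$\le$'' from known arguments. For the lower bound, fix $p\in M$ and apply (\ref{ineq-conv-foc}) with $U=B_{\operatorname{inj}(p)}(p)$. Since the focal radius of a set is the infimum of the pointwise focal radii over that set, $\operatorname{foc}(U)=\inf_{x\in U}\operatorname{foc}(x)\ge \inf_{x\in M}\operatorname{foc}(x)=\operatorname{foc}(M)$, and likewise $\operatorname{inj}(p)\ge \operatorname{inj}(M)$. Hence $\operatorname{conv}(p)\ge \min\{\operatorname{foc}(M),\tfrac12\operatorname{inj}(M)\}$, and taking the infimum over $p$ gives $\operatorname{conv}(M)\ge \min\{\operatorname{foc}(M),\tfrac12\operatorname{inj}(M)\}$. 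This half is therefore an immediate consequence of the new local estimate, with no global comparison argument needed.

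For the reverse inequality it suffices to show $\operatorname{conv}(M)\le \tfrac12\operatorname{inj}(M)$ and $\operatorname{conv}(M)\le \operatorname{foc}(M)$. The first bound is classical (cf.\ \cite{Wh,ChEb}): assuming $\operatorname{inj}(M)<\infty$, Klingenberg's lemma yields either a geodesic loop at a point (arising from two minimal geodesics to a nearest cut point) or a conjugate point at distance $\operatorname{inj}(M)$; in either case, for any $\rho>\tfrac12\operatorname{inj}(M)$ one produces a base point $q$ and two points $a,b\in B_{\rho}(q)$ lying on that geodesic whose unique minimizing connection in $M$ is forced to leave $B_{\rho}(q)$, so $B_{\rho}(q)$ is not convex and $\operatorname{conv}(q)\le \rho$ fails. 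The second bound, $\operatorname{conv}(M)\le \operatorname{foc}(M)$, is Dibble's theorem \cite{Dib}: if $\operatorname{foc}(M)<\operatorname{conv}(M)$, choose $\rho$ strictly in between and a point $x$ with $\operatorname{foc}(x)<\rho$; by the characterization of $\operatorname{foc}(x)$ as the largest radius on which the distance to $x$ stays strictly convex (Lemma~2.4; see also \cite{Dib}), working in the pullback metric $\exp_x^{*}g$ on $T_xM$, which is nonsingular out to radius $\operatorname{conj}(x)>\operatorname{foc}(x)$, one converts the second-order degeneracy of $d(\cdot,x)$ at radius $\operatorname{foc}(x)$ into two points in a ball $B_{s}(y)\subset B_{\rho}(x)$ whose minimal geodesic exits $B_{s}(y)$, contradicting the strong convexity of $B_{\rho}(x)$.

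The lower bound is routine bookkeeping once Theorem~\ref{thm-conv-lowerbound} is available, and it is the only place the new estimate enters. The real difficulty lies in the upper bound, and within it in the focal-radius inequality $\operatorname{conv}(M)\le \operatorname{foc}(M)$: the delicate point is to upgrade the infinitesimal, Hessian-level failure of convexity of $d(\cdot,x)$ at the focal radius to an honest pair of points whose unique minimal geodesic escapes a metric ball, while keeping all the geodesic segments involved short enough to remain embedded and length-minimizing. Since this is exactly the content of \cite{Dib}, I would cite it for that inequality rather than reproduce the argument, and present Corollary~\ref{cor-Dib-global} as the statement that its nontrivial lower half is now subsumed by the local estimate (\ref{ineq-conv-foc}).
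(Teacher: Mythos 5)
Your decomposition is exactly the paper's: the lower bound is read off from Theorem~\ref{thm-conv-lowerbound} via $\operatorname{foc}(B_{\operatorname{inj}(p)}(p))\ge\operatorname{foc}(M)$, and the upper bound is split into $\operatorname{conv}(M)\le\frac12\operatorname{inj}(M)$ (obtained, as in the paper, from the midpoint $q$ of a minimal geodesic joining a point $p$ with $\operatorname{inj}(p)<\operatorname{inj}(M)+\epsilon$ to its nearest cut point) and $\operatorname{conv}(M)\le\operatorname{foc}(M)$, which both you and the paper ultimately charge to Dibble's perturbation of focal radii (Lemma~\ref{lem-converg-foc-radi} feeding into (\ref{prop-conv-focal-cutdecay}.2)). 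The only substantive difference is that the paper routes the upper bounds through the cut-decay radius $R_c(q)$ and Proposition~\ref{prop-conv-focal-cutdecay}, which also repairs the one imprecision in your sketch: in the geodesic-loop case $B_\rho(q)$ fails to be convex because the minimal geodesic between the two endpoints is not unique (and in the conjugate case one first passes to a nearby cut point admitting two minimal geodesics, these being dense in the cut locus), not because a unique minimizer is forced to leave the ball.
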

Since (\ref{eq-conv-global}) easily follows from Theorem \ref{thm-conv-lowerbound}, (\ref{ineq-conv-foc}) can be viewed as its local version.
 
Gulliver \cite{Gulliver1975On} proved that the longest geodesic in a convex ball $B_{r}(p)$ of $M$ is the diameter of $B_{r}(p)$, provided the decay of sectional curvature away from $p$ satisfies certain convexity assumption. In particular, it was proved in \cite{Gulliver1975On} that any geodesic in $B_r(p)$ with 
\begin{equation}\label{cond-inj-curv}
0<r<\min\{\operatorname{inj}(p),\frac{\pi}{2\sqrt{K}}\}
\end{equation} has length $\le 2r$. Clearly, the conclusion fails to hold for $r>\operatorname{inj}(p)$ in general, because closed geodesics may exist in $B_r(p)$. Since the convexity of $B_r(p)$ is essential in his proof, Gulliver suspected (\cite{Gulliver1975On}) that such conclusion unlikely remains true if the curvature assumptions in (\ref{cond-inj-curv}) are replaced by weaker convexity hypothesis. However, our next theorem shows that his conclusion generally holds inside half of conjugate radius without any convexity assumption.
\begin{theorem}\label{thm-geodesic-length}
	Let $U=B_{\operatorname{inj}(p)}(p)$, and let $0<r\le \min\{\operatorname{inj}(p),\frac{1}{2}\operatorname{conj}(U)\}$.
	Then any geodesic in $B_r(p)$ has length $\le 2r$.
\end{theorem}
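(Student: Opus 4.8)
The plan is to transplant the whole configuration to the tangent space at $p$, dispose of the radial case by inspection, and handle the remaining case through the cut locus together with the Generalized Klingenberg's Lemma. Since $r\le\operatorname{inj}(p)$, the open ball $B_r(p)$ contains no cut point of $p$, so $\exp_p$ restricts to a diffeomorphism of $\tilde U:=B_{\operatorname{inj}(p)}(0)\subset T_pM$ onto $U$; endow $\tilde U$ with $\tilde g:=\exp_p^*g$, so that $\exp_p\colon(\tilde U,\tilde g)\to(U,g)$ is an isometry. A geodesic $\gamma$ of $g$ contained in $B_r(p)$ then lifts to a unit-speed $\tilde g$-geodesic $\tilde\gamma$ contained in $B_r(0)$ of the same length $L$, and $\psi(t):=|\tilde\gamma(t)|=d(\gamma(t),p)<r$ by the Gauss lemma (applicable since $|\tilde\gamma(t)|<\operatorname{inj}(p)$). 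I will use two properties of $(\tilde U,\tilde g)$: (a) the $\tilde g$-geodesics through $0$ are the Euclidean rays, and the $\tilde g$-distance from $0$ equals the Euclidean norm; (b) because $\exp_p$ is a local isometry and conjugacy is local, a $\tilde g$-geodesic issuing from a point of $B_r(0)$ has no conjugate point before $\tilde g$-arclength $\operatorname{conj}(U)\ge 2r$ so long as it remains in $\tilde U$.

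If $\tilde\gamma$ meets $0$, say $\tilde\gamma(t_0)=0$, then by (a) it coincides on its entire domain with the Euclidean ray through $0$ of direction $\tilde\gamma'(t_0)$, so $\psi(t)=|t-t_0|$; since $\psi<r$ the domain lies in $(t_0-r,t_0+r)$ and $L<2r$. So assume $\psi>0$ throughout and, for contradiction, $L>2r$. For small $\varepsilon>0$ with $2r+\varepsilon\le L$ consider $\gamma|_{[0,2r+\varepsilon]}$: its endpoints lie in $B_r(p)$, and the broken path from $\gamma(0)$ to $p$ and then to $\gamma(2r+\varepsilon)$ along minimizing radial segments stays in $B_r(p)$ and has length $d(\gamma(0),p)+d(p,\gamma(2r+\varepsilon))<2r<2r+\varepsilon$; hence $d(\gamma(0),\gamma(2r+\varepsilon))<2r+\varepsilon$ and $\gamma|_{[0,2r+\varepsilon]}$ is not minimizing. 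Let $\gamma(\ell)$ be the cut point of $\gamma(0)$ along $\gamma$, so that $\ell=d(\gamma(0),\gamma(\ell))<2r$ (both $\gamma(0)$ and $\gamma(\ell)$ lie in $B_r(p)$); by (b), $\gamma(\ell)$ is not conjugate to $\gamma(0)$ along $\gamma$, so there is a second minimizing geodesic from $\gamma(0)$ to $\gamma(\ell)$.

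The plan is then to invoke the Generalized Klingenberg's Lemma for the non-conjugate cut pair $\gamma(0),\gamma(\ell)\in B_r(p)$ — both interior to $B_{\operatorname{inj}(p)}(p)$ and at distance $\ell<2r$ — to extract a geodesic loop, based at a point of $B_r(p)$, whose length is controlled by $\ell$ together with how far $\gamma$ runs beyond $\ell$, and then to confront this loop with the uniform bounds $\operatorname{inj}(x)\ge\min\{\operatorname{inj}(p),\operatorname{conj}(U)\}-d(p,x)$ (Theorem~\ref{thm-inj-lipdecay}) and $\operatorname{conj}(x)\ge 2r$ valid for every $x\in B_r(p)$, to see that such a loop is impossibly short once $L>2r$. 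This last step is the substantive difficulty, and two features make it delicate. First, a geodesic strictly shorter than $2r$ inside $B_r(p)$ can honestly fail to be minimizing and can pass a non-conjugate cut point — this already occurs for suitable geodesics of length just under $2r$ on a flat cylinder whose girth is slightly more than $2r$ — so the lemma cannot merely forbid the configuration above; it must be used quantitatively, keeping the slack $\varepsilon$ (equivalently the strict inequality $L>2r$) in play, to produce a loop that undershoots what the injectivity and conjugate radii of points of $B_r(p)$ permit. Second, when $2r>\operatorname{inj}(p)$ the second minimizing geodesic from $\gamma(0)$ to $\gamma(\ell)$ need not lie in $U$, hence cannot be lifted to the pullback, and that portion of the argument must be carried out directly in the complete manifold $M$. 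I expect these two points to be the core of the proof.
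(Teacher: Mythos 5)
Your proposal sets up the right framework (lifting to the pullback metric, disposing of the radial case, locating a non-conjugate cut point $\gamma(\ell)$ of $\gamma(0)$ along $\gamma$ with $\ell<2r$), but it stops exactly where the proof has to be made: you announce a ``plan'' to extract a loop from the Generalized Klingenberg's Lemma and to confront it with Theorem \ref{thm-inj-lipdecay}, and you yourself flag that this confrontation is the ``substantive difficulty.'' That difficulty is real and your sketch does not resolve it. The Generalized Klingenberg's Lemma applies to a \emph{minimizer of the perimeter function} $P_{p';q}$ on $\operatorname{Cut}(q)$ for a chosen pair of base points, not to the first cut point $\gamma(\ell)$ along a particular geodesic, and you never specify the base points or how the resulting broken geodesic contradicts anything: the bound $\operatorname{inj}(x)\ge\min\{\operatorname{inj}(p),\operatorname{conj}(x)\}-d(p,x)$ can be as weak as $\operatorname{inj}(x)\ge r$ for $x\in B_r(p)$ (and is vacuous when $\operatorname{inj}(p)=r$), which does not forbid the configuration you have produced. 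Your second worry --- that when the relevant geodesics have length comparable to $2r$ they may leave $U=B_{\operatorname{inj}(p)}(p)$ and so cannot be lifted --- is likewise genuine and is not addressed.

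The missing idea, which is how the paper closes both gaps at once, is to change \emph{where} the argument lives and \emph{which point anchors the perimeter function}. Shrink to $r_1<r$ with $\gamma\subset B_{r_1}(p)$, pull back the metric to $T_pM$, and \emph{extend it to a complete metric} $\tilde g$ on all of $T_pM$ that agrees with $\exp_p^*g$ on $B_{2r_1}(o)$ and is Euclidean far away; in $(T_pM,\tilde g)$ the origin has $\operatorname{inj}(o)\ge 2r_1$. Now apply the perimeter argument with base points $o$ and $v=\tilde\gamma(0)$: if the lift $\tilde\gamma$ met a cut point of $v$, the minimizer $w$ of $P_{o;v}$ on $\operatorname{Cut}(v)$ would satisfy $P_{o;v}(w)<4r_1$, so $w$ and both minimal segments $o\,w$ and $w\,v$ stay inside $B_{2r_1}(o)$ (this is what kills your ``leaves $U$'' problem), $w$ is not conjugate to $v$ since $d(v,w)<2r_1<\operatorname{conj}(U)$, and the lemma produces a second geodesic from $o$ to $v$ of length $<2r_1$ --- contradicting $\operatorname{inj}(o)\ge 2r_1$. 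Hence $\tilde\gamma$ is \emph{minimizing}, and since its endpoints lie in $B_{r_1}(o)$ its length is $<2r_1<2r$. Note that the conclusion obtained this way is stronger than the one you were aiming for: the lifted geodesic is globally minimizing in the completed pullback, so there is no need for the quantitative loop-length bookkeeping you anticipated.
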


	In literature another convexity radius of $p$ (cf. \cite{Lafontaine1987Riemannian, IsaacChavel2006Riemannian, Pe16}) are more commonly used, which is defined to be the supremum of $r>0$ such that any open ball of radius $\le r$ centered at $p$ is convex. 
	To clarify these two concepts we call the latter \emph{concentric convexity radius} of $p$ and denote it by $\operatorname{conv_{ct}}(p)$.
	The difference between convexity radius and the concentric case is, the latter does not require that balls contained in $B_r(p)$ centered at other points than $p$ are also convex. Clearly,
	$$\operatorname{conv_{ct}}(p)\ge \operatorname{conv}(p),\text{ and }\operatorname{conv_{ct}}(M)=\operatorname{conv}(M).$$
	
	The author finds that sometimes the continuity of $\operatorname{conv_{ct}}(p)$ is confused with that of $\operatorname{conv}(p)$ in literature (eg. \cite{Azagra2007A,Hosseini2013Euler}). 
	By definition, $\operatorname{conv}(p)$ is clearly $1$-Lipschitz in $p$. In contrast, $\operatorname{conv_{ct}}(p)$ may be not continuous in $p$, and it may well happen that $\operatorname{conv_{ct}}(p)>\operatorname{conv}(p)$. 
	In this paper we will clarify these two different concepts and give explicit counterexamples on the discontinuity of $\operatorname{conv_{ct}}$. We state it as the following theorem.
	
	\begin{theorem}\label{thm-counterexample}
		Any smooth manifold $M$ of dimension $\ge 2$ admits a Riemannian metric such that there is a sequence $z_i$ of points converging to $z$ with $\operatorname{conv_{ct}}(z_i)<2$ and $\operatorname{conv_{ct}}(z)> 10$.
	\end{theorem}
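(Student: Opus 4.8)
\emph{Proof idea.} We produce the metric by gluing into an (almost) flat metric, near a chosen point $z$, an infinite sequence of tiny spherical caps of enormous sectional curvature but vanishing ``size'', placed in pairwise distinct directions, arranged so that $z$ does not feel them — its distance function stays smooth and convex, and it acquires no new cut points — while a point sitting in the ``focal shadow'' of the $i$-th cap has a non-convex metric ball of radius $<2$.

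\textbf{Set-up.} Fix any complete metric $g_0$ on $M$ and a point $z$. Replacing $g_0$ by $\lambda^2 g_0$ with $\lambda$ large and passing to normal coordinates at $z$, we may assume the metric is $C^\infty$-close to the Euclidean metric on a large Euclidean ball $B_{100}(z)$, close enough that the estimates below hold with room to spare; it then suffices to describe a compactly supported modification inside $B_{100}(z)$, the rest of $M$ being irrelevant. For $i\ge i_0$ glue into the flat metric, along the Euclidean ray from $z$ in a direction $\phi_i$, a small spherical cap $C_i$: a geodesic ball of intrinsic radius $\rho_i$ in a round sphere of radius $r_i$ (so of constant curvature $\kappa_i=r_i^{-2}$), of angular radius $\alpha_i:=\rho_i/r_i<\pi/2$, centered at Euclidean distance $\epsilon_i$ from $z$. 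Choose the parameters (e.g. $\alpha_i\sim i^{-1}$, $r_i\sim i^{-2}$, $\rho_i\sim i^{-3}$, $\epsilon_i\sim i^{-1}$) so that, as $i\to\infty$: $\alpha_i\to 0$, $\epsilon_i\to 0$, $\rho_i\ll\epsilon_i$, the product $\kappa_i\rho_i\epsilon_i$ stays below a fixed small $\delta<\tfrac14$, but $\kappa_i\rho_i\to\infty$; and choose the directions $\phi_i$ in a small arc so that the angular windows $\{\,|\theta-\phi_i|\lesssim\rho_i/\epsilon_i\,\}$, $i\ge i_0$, are pairwise disjoint (and no $\phi_i-\phi_j$ equals $0$ or $\pi$). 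Since $\alpha_i<\pi/2$ each $C_i$ is strictly less than a hemisphere, so it carries no conjugate points, and the unique minimizer between nearby points either avoids $C_i$ or runs over $C_i$, the latter being strictly shorter than any path going around the base of $C_i$ (because $2\alpha_i<\pi\sin\alpha_i$).

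\textbf{$\operatorname{conv_{ct}}(z)>10$.} By disjointness of the angular windows each geodesic from $z$ meets at most one cap; crossing $C_i$ it decreases $|J|'$ (for a normal Jacobi field $J$ with $J(0)=0$) by about $2\kappa_i\rho_i\epsilon_i\le 2\delta<\tfrac12$, while the nearly-flat background changes $|J|'$ by a negligible amount, so $|J|'$ stays positive throughout $B_{40}(z)$: $\operatorname{foc}(z)>40$. Likewise ``over $C_i$ beats around $C_i$'' together with near-flatness gives that no geodesic from $z$ acquires a cut point before distance $40$, so $\operatorname{inj}(z)>40$; hence $d(z,\cdot)$ is smooth and convex on $B_{40}(z)$, and there is no pair $a,b\in B_{13}(z)$ joined by two distinct minimizers. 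Consequently, for $s\le 13$ and $a,b\in B_s(z)$, the minimizer $\gamma$ between them is unique and, having length $<26$, lies in $B_{40}(z)$, so $t\mapsto d(z,\gamma(t))$ is convex and bounded by $\max\{d(z,a),d(z,b)\}<s$; thus $B_s(z)$ is convex and $\operatorname{conv_{ct}}(z)\ge 13>10$.

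\textbf{$\operatorname{conv_{ct}}(z_i)<2$ and $z_i\to z$.} Let $z_i$ lie on the ray $\phi_i$ at Euclidean distance $\sim(\kappa_i\rho_i)^{-1}$ beyond $C_i$ — just past the ``critical shadow distance'' $(\kappa_i\rho_i)^{-1}$ of $C_i$ — so $d(z_i,C_i)\sim(\kappa_i\rho_i)^{-1}$ and, by the parameter choices, $d(z,z_i)\sim\epsilon_i+(\kappa_i\rho_i)^{-1}\to 0$ (and $z_i\in B_{100}(z)$ for $i$ large). By the disjointness of the windows the geodesic from $z_i$ running into $C_i$ meets no other cap before $C_i$, so the normal Jacobi field $J$ with $J(0)=0$ has $|J|\approx d(z_i,C_i)$ on reaching $C_i$; crossing $C_i$ then decreases $|J|'$ by about $2\kappa_i\rho_i\,d(z_i,C_i)\gtrsim 1$, and in fact, since $j''=-\kappa_i j<0$ throughout $C_i$, $|J|'$ becomes \emph{strictly} negative. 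Hence for $s:=\operatorname{foc}(z_i)+\eta$ with $\eta$ small the geodesic sphere $\partial B_s(z_i)$ is strictly concave at the corresponding point $x$; a sufficiently short arc $\sigma$ of the geodesic tangent to $\partial B_s(z_i)$ at $x$ has both endpoints in $B_s(z_i)$, is the unique minimizer between them, yet bulges out of $B_s(z_i)$, so $B_s(z_i)$ is not convex. Since $\operatorname{foc}(z_i)\le d(z_i,C_i)+O(\rho_i)\to 0$, we have $s<2$ for $i$ large, so $\operatorname{conv_{ct}}(z_i)<2$. Relabelling, $(z_i)$ is the required sequence. (In dimension $n>2$ one works with $(n-1)$-parameter families of normal Jacobi fields and caps of $S^n$; nothing changes.)

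\textbf{Main difficulty.} The heart of the argument is the quantitative Jacobi comparison that makes the caps simultaneously \emph{weak relative to $z$} (so that even a geodesic from $z$ meeting a cap never has its Jacobi field turned around) and \emph{strong relative to $z_i$} (so that the Jacobi field from $z_i$ is turned around before distance $2$), consistently across the whole family of parameters $(\alpha_i,r_i,\rho_i,\epsilon_i)$; one must also check that inserting caps of unbounded curvature introduces no cut point of $z$ and no pair of distinct minimizers within bounded distance of $z$. The remaining input is the elementary fact, used above, that a strictly concave geodesic sphere forces a metric ball to be non-convex.
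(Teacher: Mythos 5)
There is a fatal, structural gap: the construction you describe cannot be realized by a smooth Riemannian metric, and its two claimed conclusions are mutually inconsistent. To make $z_i\to z$ you need both $\epsilon_i\to 0$ and the ``shadow distance'' $(\kappa_i\rho_i)^{-1}\le d(z_i,C_i)\to 0$, i.e.\ $\kappa_i\rho_i\to\infty$; since infinitely many disjoint caps centered at distance $\epsilon_i\to 0$ from $z$ force $\rho_i\to 0$, this gives $\kappa_i\to\infty$ on caps accumulating at $z$, so the sectional curvature is unbounded in every neighborhood of $z$ and the glued object is not a $C^2$ (let alone smooth) metric. Equivalently, for any genuine smooth metric your conclusions contradict the lower semicontinuity of the focal radius ((\ref{lem-elemen-focal}.3)): if $\operatorname{foc}(z_i)\le d(z_i,C_i)+O(\rho_i)<2$ with $z_i\to z$, then passing to a limit of the unit-normalized Jacobi fields $J_i$ with $|J_i|'(T_i)\le 0$ (the curvature being locally bounded near $z$, the limit exists and $T_i$ stays bounded away from $0$) yields $\operatorname{foc}(z)\le\liminf\operatorname{foc}(z_i)\le 2$, so your key claim $\operatorname{foc}(z)>40$ is impossible. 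This is not a tunable detail: the whole mechanism you propose (caps strong relative to $z_i$ but weak relative to $z$, with $z_i\to z$) is exactly what semicontinuity forbids. There are also secondary untreated points — gluing a positively curved cap into a flat region forces, by Gauss--Bonnet, a negatively curved transition annulus whose defocusing you never account for; a cap deflects geodesics by an angle of order $\alpha_i$, which can exceed the width of your angular windows, undermining ``each geodesic from $z$ meets at most one cap''; and uniqueness of minimizers between pairs $a,b\in B_{13}(z)$ is a statement about $R_c(z)$ (cut loci of nearby points), not about $\operatorname{inj}(z)$ — but the semicontinuity obstruction alone sinks the approach.

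What any correct example must look like is dictated by Proposition \ref{prop-conv-focal-cutdecay}: since $R_c$ decays at most linearly, $\operatorname{conv_{ct}}(z_i)<2$ with $z_i\to z$ and $\operatorname{conv_{ct}}(z)>10$ forces $\operatorname{foc_e}(z_i)<2$, hence in the limit $\operatorname{foc}(z)\le 2$ while $\operatorname{foc_e}(z)>10$; that is, the discontinuity can only come from the gap between $\operatorname{foc}$ and $\operatorname{foc_e}$ at the limit point — a Jacobi field from $z$ whose $|J|'$ reaches $0$ but never becomes negative. This borderline behavior is precisely what the paper engineers in Theorem \ref{thm-counterexample-euclidean}: Gulliver's rotationally symmetric metric (one fixed spherical region glued to a hyperbolic exterior, with focal points but no conjugate points, hence no cut points and $R_c\equiv\infty$), the point $z=\gamma(t_0)$ chosen as the last point of finite focal radius along a geodesic from a focal-radius-$\frac{\pi}{2}$ point to the center (so $\operatorname{foc}(z)\le 2$ but $\operatorname{foc_e}(z)=\infty$), and Dibble's Lemma \ref{lem-converg-foc-radi} to produce $z_i\to z$ with $\operatorname{foc_e}(z_i)\to\operatorname{foc}(z)$, whence $\operatorname{conv_{ct}}(z_i)=\operatorname{foc_e}(z_i)<2$ and $\operatorname{conv_{ct}}(z)=\infty$ by (\ref{prop-conv-focal-cutdecay}.1); Theorem \ref{thm-counterexample} then follows by surgery far from $z$. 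If you want to salvage your idea, you would have to replace the escalating caps by a single borderline configuration of this kind, which essentially reproduces the paper's construction.
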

	By the continuity of convexity radius, in the above theorem we have  $\operatorname{conv_{ct}}(z)>\operatorname{conv}(z)$.
	The Riemannian metrics constructed by Gulliver in \cite{Gulliver1975On} satisfies the conclusion of Theorem \ref{thm-counterexample}. After this paper was put on the arXiv, the author was informed that Dibble had independently constructed examples \cite{Dib2} to show the discontinuity of the concentric convexity radius.

	We now give a sketched proof of Theorem \ref{thm-conv-lowerbound}. 
	Recall that Klingenberg's Lemma (cf. \cite{Kl,ChEb}, or Lemma 3.1 in Section 3) says that if $q$ is a nearest cut point of $p$ such that $q$ is not conjugate to $p$ along any minimal geodesic connecting them, then there are exactly two minimal geodesics between $p$ and $q$ which form a geodesic loop at $p$ formed by passing through $q$. It directly follows that the injectivity radius and conjugate radius satisfy
	\begin{equation}\label{eq-inj-pt}
	\operatorname{inj}(p)=\min\{\frac{1}{2}l(p), \operatorname{conj}(p)\}.
	\end{equation}
	 We find that what happens for convexity radius and focal radius is  similar to the above. In proving (\ref{ineq-conv-foc}) the key technical tool is a \emph{generalized} version of Klingenberg's lemma, as follows. 
	 
	 \begin{lemma}[Generalized Klingenberg's lemma, 
	 \cite{Xu2016On, Innami2012The}]\label{lem-gene-kling}
	 Let $p,q$ be any two points in $M$, and $x_0$ be a cut point of $q$ which minimizes the perimeter function $P_{p;q}=d(p,\cdot)+d(\cdot,q)+d(p,q)$ in the set $\operatorname{Cut}(q)$ of cut points of $q$. If $x_0$ is not a conjugate of $q$, then up to a reparametrization there are at most $2$ minimal geodesics from $p$ to $x_0$, and each of them admits an extension at $x_0$ that is a minimal geodesic from $x_0$ to $q$. 
	 \end{lemma}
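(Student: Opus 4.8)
The plan is to run a Klingenberg-type first-variation argument, in which the pair $(p,q)$ replaces the single base point of the classical lemma and the hypothesis that $x_0$ minimizes $P_{p;q}$ on $\operatorname{Cut}(q)$ replaces ``$q$ is a nearest cut point of $p$''. Since $d(p,q)$ is constant, this hypothesis says exactly that $x_0$ is a local minimum of $h:=d(p,\cdot)+d(\cdot,q)$ restricted to $\operatorname{Cut}(q)$, and that is all we use. The first step is to describe $\operatorname{Cut}(q)$ near $x_0$. Because $x_0\in\operatorname{Cut}(q)$ is not conjugate to $q$, there are finitely many, and at least two, minimal geodesics $\tau_1,\dots,\tau_k$ from $x_0$ to $q$; let $v_1,\dots,v_k$ be their unit initial vectors at $x_0$. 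On a small neighborhood $V$ of $x_0$ each $\tau_i$ sits inside a smooth family of geodesics ending at $q$, giving smooth ``branch'' functions $f_i\colon V\to\mathbb R$ with $f_i(x_0)=d(x_0,q)$, $\nabla f_i(x_0)=-v_i$, $d(\cdot,q)=\min_i f_i$ on $V$, and $\operatorname{Cut}(q)\cap V=\{x:\min_i f_i(x)\text{ is attained at least twice}\}$. For each edge $\{v_i,v_j\}$ of the convex hull $C$ of $\{v_1,\dots,v_k\}$, the hypersurface $\{f_i=f_j\}$ is smooth near $x_0$, and moving off $x_0$ along it in a direction $w$ in the relative interior of the normal cone $W_{ij}=\{w:\langle v_i,w\rangle=\langle v_j,w\rangle\ge\langle v_l,w\rangle\ \forall l\}$ keeps one inside $\operatorname{Cut}(q)$; so $\bigcup_{ij}W_{ij}$ lies in the contingent cone of $\operatorname{Cut}(q)$ at $x_0$.

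Second, the extension statement. Fix a minimal geodesic $\sigma\colon[0,a]\to M$ from $p$ to $x_0$ and put $u=\sigma'(a)$. Truncating $\sigma$ just before $x_0$ and appending a short minimal geodesic to a variable endpoint gives a smooth upper support function for $d(p,\cdot)$ near $x_0$, equal to $d(p,x_0)$ with gradient $u$ at $x_0$ (this uses nothing about $p$). Hence along any curve issuing from $x_0$ with velocity $w$, the upper right derivative of $h$ is at most $\langle u,w\rangle-\max_i\langle v_i,w\rangle$. Suppose $u\neq v_i$ for all $i$. Since $|u|=|v_i|=1$ and the $v_i$ are distinct, $u$ cannot lie in $C$: a non-vertex point of $C$ is a proper convex combination of two or more distinct unit vectors, hence has norm $<1$, and the vertices of $C$ are among the $v_i$. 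So $u\notin C$, and the ``descent cone'' $\Omega:=\{w:\langle u,w\rangle<\max_i\langle v_i,w\rangle\}$ is a nonempty open cone whose complement is a full-dimensional convex cone; thus $\Omega$ is connected. Were $\Omega$ disjoint from the contingent cone, it would lie in the interior of a single vertex normal cone $N_{v_{i_0}}$ of $C$, forcing, for any $i_1\neq i_0$, an open half-space $\{\langle v_{i_1}-u,\cdot\rangle>0\}$ to sit inside $\{\langle v_{i_0}-v_{i_1},\cdot\rangle>0\}$, i.e. $v_{i_1}-u$ positively proportional to $v_{i_0}-v_{i_1}$; a one-line computation using the unit lengths then gives $v_{i_0}=v_{i_1}$, a contradiction. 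Hence $\Omega$ meets some $\operatorname{relint}W_{ij}$, and pushing $x_0$ along $\{f_i=f_j\}$ in that direction strictly decreases $h$ while staying in $\operatorname{Cut}(q)$, contradicting minimality. So every minimal geodesic from $p$ to $x_0$ continues past $x_0$ as some $\tau_i$.

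Third, the bound ``at most two''. Let $u_1,\dots,u_m$ be the distinct terminal velocities at $x_0$ of the minimal geodesics from $p$ to $x_0$; by the previous paragraph each is one of the $v_i$, and (unit vectors in $C$ being vertices) the $v_i$ are precisely the vertices of $C$. The first-order condition at the constrained minimum $x_0$ reads $\min_l\langle u_l,w\rangle\ge\max_i\langle v_i,w\rangle$ for every $w$ in the contingent cone; since each $u_l$ equals some $v_i$, this forces $\langle u_l,w\rangle=\max_i\langle v_i,w\rangle$ for all $w\in\bigcup_{ij}W_{ij}$, i.e. the vertex $u_l$ lies on every edge of $C$. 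Because a polytope of dimension $\ge2$ has, for every vertex, some edge avoiding it, this forces $\dim C\le1$; as the $v_i$ are distinct unit vectors, $C$ is the segment $[v_1,v_2]$ and $k=2$, whence $m\le2$. Specializing to $p=q$, the whole argument reduces to the classical Klingenberg lemma: two minimal geodesics whose terminal velocities at $x_0$ are opposite, forming a geodesic loop through $x_0$.

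The main obstacle is the convex-geometric core of the second step: showing that the descent cone $\Omega$ necessarily meets the contingent cone of $\operatorname{Cut}(q)$ at the possibly-singular point $x_0$. This is precisely where the non-conjugacy of $x_0$ to $q$ is used --- it yields the clean branch description of $\operatorname{Cut}(q)$ near $x_0$ --- together with the unit normalization of the velocity vectors. A secondary difficulty is to set up, at a singular $x_0$, the first-order optimality condition and the contingent-cone inclusions carefully, since there $h$ is merely locally Lipschitz and $\operatorname{Cut}(q)$ is not a manifold; the ``at most two'' step likewise needs this singular case revisited.
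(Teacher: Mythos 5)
Your argument is correct in outline and reaches the stated conclusion, but it is a genuinely different proof from the one the paper gives in Section 3. The paper runs the classical Klingenberg lifting argument adapted to two base points: given a minimal geodesic $\beta_1$ from $p$ to $x_0$ and minimal geodesics $\alpha_1,\alpha_2$ from $q$ to $x_0$ neither of which extends $\beta_1$ at $x_0$, it joins $\beta_1(t)$ to $\alpha_i(t)$ by short geodesics $\gamma_{i,t}$, uses the strict triangle inequality to get $P_{p;q}(\gamma_{i,t}(s))<P_{p;q}(x_0)$, concludes from the minimality of $P_{p;q}$ on $\operatorname{Cut}(q)$ that $\gamma_{i,t}$ avoids $\operatorname{Cut}(q)$, lifts it through $\exp_q$ on a neighborhood of $\alpha_1'(0)$, and derives a contradiction from the endpoint $t\cdot\alpha_2'(0)$ of the lift. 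You instead stay \emph{inside} $\operatorname{Cut}(q)$: you invoke the finite-branch local structure of the cut locus at a non-conjugate cut point, write a nonsmooth first-order optimality condition for $d(p,\cdot)+d(\cdot,q)$ restricted to $\operatorname{Cut}(q)$, and analyze it through the normal fan of the convex hull $C$ of the $v_i$; the descent-cone computation and the ``every terminal velocity $u_l$ lies on every edge of $C$'' step are sound (in your half-space computation the equality case of the triangle inequality actually forces $u=v_{i_1}$, contradicting the standing assumption $u\neq v_i$ for all $i$ --- same contradiction, slightly different endpoint, and the complement of $\Omega$ need not be full-dimensional, though convexity alone gives connectedness of $\Omega$ for $n\ge 2$). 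The trade-off is this: your route requires $x_0$ to be non-conjugate to $q$ along \emph{every} minimal geodesic between them (otherwise there may be infinitely many branches and no normal-fan picture), which matches the hypothesis of the lemma as stated in the introduction and is what the applications supply via the focal radius; the paper's lifting argument needs only \emph{one} non-conjugate minimal geodesic from $q$ to $x_0$, which is the sharper form actually proved as Lemma 3.3 and is what lets it subsume the improved one-point Klingenberg lemma. In exchange, your argument is more structural --- it produces $k=2$, the pairing of extensions, and the ``at most two'' bound in a single computation --- at the cost of importing the local structure theorem for $\operatorname{Cut}(q)$ and some nonsmooth analysis (contingent cones, one-sided derivatives of distance functions), all of which the paper's proof avoids in favor of the triangle inequality and the local injectivity of $\exp_q$. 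Do also dispose of the degenerate case $p=x_0$ separately, as the paper does.
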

	 
	The proofs of Theorem \ref{thm-conv-lowerbound} and \ref{thm-inj-lipdecay} are rather simple once the generalized version of Klingenberg's lemma is known.
	For Theorem 1.1 we argue by contradiction.
	Let $0<r\le \frac{1}{2}\operatorname{inj}(p)$ such that for any point $x\in U=B_{\operatorname{inj}(p)}(p)$, $r\le \operatorname{foc}(x)$. By elementary facts on focal radius, the distance function $d(x,\cdot)$ is convex inside $B_r(p)$ where it is smooth. If $B_r(p)$ is not strongly convex, then by a standard closed-and-open argument (see the proof of Proposition 2.1) it is not difficult to deduce that there are $q,z\in B_r(p)$ with $z\in \operatorname{Cut}(q)$. Let $x_0$ be a minimum point in $\operatorname{Cut}(q)$ of the perimeter function $P_{p;q}(x)$. Then $$d(x_0,p)\le \frac{1}{2}P_{p;q}(x_0)\le 2r <\operatorname{inj}(p),$$
	which implies that $x_0\in U$. Because $d(x_0,q)$ is strictly less than $2r\le 2 \operatorname{foc}(U)$, $x_0$ is not a conjugate point of $q$. By the generalized Klingenberg's Lemma, we conclude that there is  another geodesic from $p$ to $q$ passing through $x_0$ which lies in $U$, a contradiction. The proof of Theorem \ref{thm-inj-lipdecay} is similar.
	
	\begin{remark}
		The global version of Lemma \ref{lem-gene-kling} was first established in \cite{Innami2012The}, where the same idea as above was applied to conclude the existence of pole points whose injectivity radius is infinite. Here we are using the local version developed in \cite{Xu2016On}. We will give a quick review and a simplified proof of Lemma \ref{lem-gene-kling} in Section 3. It turns out that the condition on conjugate points in Theorem \ref{thm-inj-lipdecay} can be further weakened slightly to ``totally conjugate" points defined in \cite{Xu2016On} (see the last part of Section 3).
	\end{remark}
		
	We also have similar lower bounds of the concentric convexity radius in terms of $\operatorname{foc}(p)$ and conjugate radii around $p$ (see Theorem \ref{thm-convpt-exfoc}). Among other things the existence of convex neighborhoods has been widely used in various problems of differential geometry (see \cite{Ber}). Further applications of (\ref{ineq-conv-foc}) on convex radius and the decay rate of injectivity radius (\ref{ineq-aroundinj}) would be given in separate papers such as \cite{HuXu}.
	
	The rest of the paper is organized as follows. Based on some basic properties of focal point and focal radius from \cite{Sul, Dib}, a general two-sided bound on (concentric) convexity radius in terms of focal radius is given in Section 2. In Section 3 we first review the generalized Klingenberg's Lemma together with a simple proof, and then prove main theorems \ref{thm-conv-lowerbound}-\ref{thm-geodesic-length}. Theorem \ref{thm-counterexample} is proved in the last Section 4, and other examples mentioned above are given there.
	
	\textbf{Acknowledgement:} The author would like to thank Xiaochun Rong for his support, criticisms and suggestions which have improved the organization of this paper, and also thank Jiaqiang Mei for kindly sharing his results and preprint. The author is also grateful to James Dibble and Hongzhi Huang for their helpful comments. The author is supported partially by NSFC
	Grant 11401398 and by a research fund from Capital Normal University.
	
	\section{Preliminaries on Convexity Radius and Focal Radius}
	Let us recall that the focal radius of $p$, $\operatorname{foc}(p)$, is defined by
	$$\begin{aligned}
	\operatorname{foc}(p)=\min\{T: \exists \text{ non-trivial Jacobi field $J$ along a unit-speed }\\ 
	\text{ geodesic $\gamma$ such that $\gamma(0)=p$, $|J|'(T)=0$}\}.
	\end{aligned}
	$$
	We define the \emph{extended focal radius} of $p$, denoted by $\operatorname{foc_e}(p)$, to be 
	$$\begin{aligned}
	\operatorname{foc_e}(p)=\inf\{T: \exists \text{ non-trivial Jacobi field $J$ along a unit-speed }\\ 
	\text{ geodesic $\gamma$ such that $\gamma(0)=p$, $|J|'(T)<0$}\}.
	\end{aligned}$$
	That is, the extended focal radius is the supremum of the radius of open balls centered at the origin of $T_pM$ such that for each tangent vector $v$ in the ball and any normal Jacobi field $J$ along the radial geodesic $\exp_ptv$ with $J(0)=0$, $\frac{d}{dt}|J(t)|\ge 0$ $(0<t\le 1)$ holds.
	Let $R_c(p)$ be the \emph{cut-decay radius} of $p$ defined by
	$$R_c(p)=\sup\{r: \forall x\in B_r(p), B_r(p)\cap\operatorname{Cut}(x)=\emptyset\},$$
	where $\operatorname{Cut}(x)$ is the set of cut points of $x$ in $M$. It is clear that $R_c(p)\ge \frac{1}{2}\operatorname{inj}(M)$.
	
In order to clarify the difference between the convexity radius and the concentric one, we now give general descriptions of them in terms of the (extended) focal radius, by reformulating some standard facts in Riemannian geometry into a two-sided bound of (concentric) convexity radius of a point $p$ in a complete Riemannian manifold $M$.
	\begin{proposition}[\cite{KARCHER1968,Dib}, cf. \cite{Gromoll1968Riemannsche}]\label{prop-conv-focal-cutdecay}
		$ $
		\begin{enumerate}[(\ref{prop-conv-focal-cutdecay}.1)]
			\item $\operatorname{conv_{ct}}(p)=\min\{\operatorname{foc_e}(p), R_c(p)\},$
			\item $\min\{\operatorname{foc}(B_{R_c(p)}(p)), R_c(p)\}\le \operatorname{conv}(p)\le \min\{\operatorname{foc}(p),R_c(p)\},$
		\end{enumerate}
	\end{proposition}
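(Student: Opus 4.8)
The plan is to prove the two statements by combining the local structure of convexity (nearby balls are convex iff distance functions are convex and no cut points intrude) with the infinitesimal characterization of convexity of distance functions via Jacobi fields. For (2.1.1), I would argue both inequalities. For the upper bound $\operatorname{conv_{ct}}(p)\le\min\{\operatorname{foc_e}(p),R_c(p)\}$: if $r>R_c(p)$, then there is $x\in B_r(p)$ with $\operatorname{Cut}(x)\cap B_r(p)\ne\emptyset$, so the distance function $d(x,\cdot)$ fails to be smooth (hence cannot be convex in the usual sense along a minimal geodesic through the cut point), and one checks $B_r(p)$ is not convex; if $r>\operatorname{foc_e}(p)$, then there is a geodesic $\gamma$ from a point near $p$ and a Jacobi field $J$ with $J(0)=0$ and $|J|'(T)<0$ for some $T<r$, which (by the standard second-variation computation, Lemma 2.4) forces $d(\gamma(0),\cdot)$ to be strictly concave in some direction at an interior point of $B_r(p)$, again breaking convexity. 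For the lower bound, fix $r<\min\{\operatorname{foc_e}(p),R_c(p)\}$ and any $q\in B_r(p)$: I would show $B_r(p)$ is convex by the usual closed-and-open argument — take $a,b\in B_r(p)$, join them by a minimal geodesic $\sigma$; since $r<R_c(p)$ no point of $B_r(p)$ is a cut point of any other, so $\sigma$ stays inside $B_r(p)$ as long as it is defined, and uniqueness/minimality propagates; the condition $r<\operatorname{foc_e}(p)$ guarantees $d(a,\cdot)$ is (weakly) convex where smooth inside $B_r(p)$, which is exactly what is needed to keep $\sigma$ from leaving. Combining, $B_r(p)$ is convex for all such $r$, giving the lower bound.

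For (2.1.2) the upper bound $\operatorname{conv}(p)\le\min\{\operatorname{foc}(p),R_c(p)\}$ follows the same pattern but now using that $\operatorname{conv}(p)$ requires \emph{every} subball $B_s(q)\subset B_r(p)$ to be convex, not just concentric ones; if $r>\operatorname{foc}(p)$ there is a geodesic from $p$ itself realizing $|J|'(T)=0$ at some $T<r$, and by pushing slightly past $T$ one gets strict concavity of $d(p,\cdot)$ along a direction, so a small off-center ball straddling that locus is non-convex. If $r>R_c(p)$ the cut-point obstruction is as before. For the lower bound $\min\{\operatorname{foc}(B_{R_c(p)}(p)),R_c(p)\}\le\operatorname{conv}(p)$, fix $r<\min\{\operatorname{foc}(B_{R_c(p)}(p)),R_c(p)\}$; then for \emph{every} $x\in B_r(p)\subset B_{R_c(p)}(p)$ we have $r\le\operatorname{foc}(x)$, so $d(x,\cdot)$ is strictly convex where smooth on $B_r(p)$, and $r<R_c(p)$ kills cut points; hence every $B_s(q)\subset B_r(p)$ is convex by the same closed-and-open argument applied with $q$ arbitrary. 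This gives $B_r(p)$ strongly convex, so $\operatorname{conv}(p)\ge r$.

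The main obstacle, and the step I would spend the most care on, is the closed-and-open (continuity method) argument showing that a minimal geodesic between two points of $B_r(p)$ cannot leave $B_r(p)$ and stays unique: one parametrizes a family of geodesics, uses that the set of pairs joined by a unique minimal geodesic contained in $B_r(p)$ is open (by the inverse function theorem applied to $\exp$, valid since there are no cut points in $B_r(p)$) and closed (a limit of minimal geodesics in the closed ball is minimal; convexity of $d(x,\cdot)$ prevents the midpoint of the limiting geodesic from touching $\partial B_r(p)$ from inside unless it is constant), and connectedness of the parameter space to conclude. The subtlety is handling the boundary behavior of $d(x,\cdot)$ and the interplay between the weak convexity (for $\operatorname{foc_e}$) versus strict convexity (for $\operatorname{foc}$) cases; I would reference the standard treatment in \cite{KARCHER1968, Dib} and cite Lemma 2.4 for the Jacobi-field characterization of convexity of distance functions rather than redo the second-variation computation.
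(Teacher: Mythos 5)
Your overall architecture (open-closed argument for the lower bounds, Jacobi-field characterization for the upper bounds) matches the paper's, but two of your key steps do not work as stated.

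The serious gap is your argument for $\operatorname{conv}(p)\le\operatorname{foc}(p)$ in (2.1.2). You claim that if $r>\operatorname{foc}(p)$, so that some Jacobi field from $p$ has $|J|'(T)=0$ with $T<r$, then ``pushing slightly past $T$'' yields strict concavity of $d(p,\cdot)$ in some direction. That inference is false: $|J|'$ can vanish at $T$ and become positive again, in which case $d(p,\cdot)$ stays weakly convex and no concentric or off-center ball around $p$ is visibly non-convex from the behavior of $d(p,\cdot)$ alone. The paper's Theorem \ref{thm-counterexample-euclidean} exhibits exactly this: points $z$ with $\operatorname{foc}(z)\le 2$ but $\operatorname{foc_e}(z)=\infty$, so $d(z,\cdot)$ is weakly convex everywhere, yet $\operatorname{conv}(z)\le 2$. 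The correct mechanism is Lemma \ref{lem-converg-foc-radi} (Dibble's lemma): when $\operatorname{foc}(p)<\infty$ there are points $p_i\to p$ with $\operatorname{foc_e}(p_i)\to\operatorname{foc}(p)$; combining this with (2.1.1) applied at $p_i$ and the $1$-Lipschitz continuity of $\operatorname{conv}$ gives $\operatorname{conv}(p)\le\operatorname{foc}(p)$. This lemma carries the real content of the upper bound (its proof is a genuine Hessian/index argument), and your sketch replaces it with a shortcut that fails.

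Two further points. First, in the lower bound of (2.1.1) you say that $r<\operatorname{foc_e}(p)$ makes $d(a,\cdot)$ convex for an endpoint $a$ of the test geodesic $\sigma$, and that this keeps $\sigma$ inside $B_r(p)$. Neither half is right: $\operatorname{foc_e}(p)$ controls only the convexity of $d(p,\cdot)$ (via (\ref{lem-elemen-focal}.2) on the pull-back metric on $T_pM$), not of $d(a,\cdot)$ for other centers, and what actually confines $\sigma$ to $B_r(p)$ is the convexity of $t\mapsto d(p,\sigma(t))$, the distance to the \emph{center}, which is $<r$ at both endpoints. This is precisely the distinction between $\operatorname{conv_{ct}}$ and $\operatorname{conv}$ that the proposition encodes, so conflating the two distance functions obscures why (2.1.1) needs only data at $p$ while (2.1.2) needs $\operatorname{foc}$ on a whole ball. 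Second, for $\operatorname{conv_{ct}}(p)\le R_c(p)$, the presence of a cut point of $x$ in $B_r(p)$ does not by itself break convexity (the cut point could be joined to $x$ by a unique, conjugate, minimal geodesic, and ``non-smooth hence non-convex'' is not a valid inference); one needs the density in $\operatorname{Cut}(x)$ of cut points joined by at least two minimal geodesics, as the paper invokes. This last point is standard and easily repaired, but the first one is not.
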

(\ref{prop-conv-focal-cutdecay}.1) is a consequence of \cite{KARCHER1968} (cf. Proposition 1 in \cite{Alexander1978Local}). (\ref{prop-conv-focal-cutdecay}.2) can be deduced from the Morse index theorem (\cite{Sul}, cf. \cite{Gromoll1968Riemannsche,ChEb}) and results in \cite{Dib}. Proposition \ref{prop-conv-focal-cutdecay} will be used in the proofs of the main theorems \ref{thm-conv-lowerbound} and \ref{thm-counterexample}. For completeness we will give a direct proof of Proposition \ref{prop-conv-focal-cutdecay} in this section.

\begin{remark}
It is easy to find examples on von Mangoldt's surfaces of revolution (\cite{Tanaka}) such that both $\frac{R_c(p)}{\operatorname{inj}(p)}$
and $\frac{\operatorname{foc_e}(p)}{\operatorname{inj}(p)}$
can be arbitrary small. Examples that $\frac{\operatorname{foc}(M)}{\operatorname{inj}(M)}$ can be arbitrary small was constructed in \cite{Dib}.	
\end{remark}

	Let us first review some basic facts from \cite{Sul} about focal points.
	Let $\alpha:(a,b)\to M$ be a minimal geodesic. $p$ is called a \emph{focal point} of $\alpha$ along a geodesic $\gamma_{s_0}$ if there is a variation $\bar \gamma(s,t):(s_0-\epsilon,s_0+\epsilon)\times [0,1]\to M$ of $\gamma_{s_0}$ through geodesics $\bar \gamma(s,\cdot)$ such that
	$\bar \gamma(s,0)=\alpha(s)$, $\bar\gamma(s_0,t)=\gamma_{s_0}(t)$, $\frac{\partial}{\partial t}\bar \gamma(s,0)\perp \alpha'(s)$ and $\bar \gamma(s_0,1)=p$.
	So focal points of $\alpha$ are just critical points of $$\exp:\nu (\alpha)\to M , \quad (s,v)\in\nu(\alpha)\mapsto\exp_{\alpha(s)}(v),$$ where $\nu(\alpha)$ is the normal bundle of $\alpha$. It is an elementary fact of Jacobi field that the focal radius of $p$ describes how far away from $p$ there exists a geodesic whose focal points containing $p$.
	\begin{lemma}[\cite{Sul}]\label{lem-focal-radi-point}
		A point $p$ is a focal point of $\alpha$ along a geodesic $\gamma:[0,1]\to M$ if and only if $\gamma(0)=\alpha(s_0)$, $\gamma(1)=p$ for some $s_0\in(a,b)$ and there is a perpendicular Jacobi field $J$ along $\gamma$ such that $J(0)=\alpha'(s_0)$, $J(1)=0$ and $\nabla_{\gamma'}J(0)\perp J(0)$.
	\end{lemma}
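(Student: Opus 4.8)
The plan is to translate the definition of a focal point into the language of Jacobi fields by means of the normal exponential map $\exp\colon\nu(\alpha)\to M$. Write $\gamma(t)=\exp_{\alpha(s_0)}(tv)$ with $v\perp\alpha'(s_0)$, so $(s_0,v)\in\nu(\alpha)$ and $p=\gamma(1)=\exp|_{\nu(\alpha)}(s_0,v)$. Unwinding the definition, $p$ is a focal point of $\alpha$ along $\gamma$ exactly when $(s_0,v)$ is a critical point of $\exp|_{\nu(\alpha)}$, i.e.\ when there is a variation $\bar\gamma(s,t)$ through geodesics with $\bar\gamma(s,0)=\alpha(s)$, $\bar\gamma(s_0,\cdot)=\gamma$, $\partial_t\bar\gamma(s,0)\perp\alpha'(s)$ and (criticality of the endpoint map) $\partial_s\bar\gamma(s_0,1)=0$. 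Both implications of the lemma will follow from one computation: for any such variation the variation field $J(t)=\partial_s\bar\gamma(s_0,t)$ is a Jacobi field along $\gamma$ with $J(0)=\partial_s\bar\gamma(s_0,0)=\alpha'(s_0)$; moreover, writing $V(s)=\partial_t\bar\gamma(s,0)$, the symmetry $\nabla_t\partial_s=\nabla_s\partial_t$ gives $\nabla_{\gamma'}J(0)=\nabla_sV(s_0)$, and differentiating $\langle V(s),\alpha'(s)\rangle\equiv0$ together with $\nabla_{\alpha'}\alpha'=0$ yields $\nabla_{\gamma'}J(0)\perp\alpha'(s_0)=J(0)$; finally $J(1)=\partial_s\bar\gamma(s_0,1)$ by construction.

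Granting this, the forward direction is short: a focal point gives such a variation with $\partial_s\bar\gamma(s_0,1)=0$, hence a Jacobi field $J$ with $J(0)=\alpha'(s_0)$, $\nabla_{\gamma'}J(0)\perp J(0)$ and $J(1)=0$; and $J$ is automatically perpendicular to $\gamma$, since $t\mapsto\langle J(t),\gamma'(t)\rangle$ is affine (as $J$ is Jacobi and $\gamma$ a geodesic), vanishes at $t=0$ because $J(0)=\alpha'(s_0)$ is orthogonal to $\gamma'(0)$, and vanishes at $t=1$ because $J(1)=0$, hence is identically zero. This is exactly the Jacobi field claimed.

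For the converse I would reconstruct the variation from the given perpendicular Jacobi field $J$ along $\gamma$ with $J(0)=\alpha'(s_0)$, $J(1)=0$ and $\nabla_{\gamma'}J(0)\perp J(0)$. Let $P_s$ be parallel transport along $\alpha$ from $\alpha(s_0)$ to $\alpha(s)$ and set
\[
V(s)=P_s\bigl(\gamma'(0)\bigr)+(s-s_0)\,P_s\bigl(\nabla_{\gamma'}J(0)\bigr).
\]
Since $\gamma'(0)$ and $\nabla_{\gamma'}J(0)$ are orthogonal to $\alpha'(s_0)$ and $P_s$ is a linear isometry taking $\alpha'(s_0)$ to $\alpha'(s)$, we have $V(s)\perp\alpha'(s)$ for all $s$, while $V(s_0)=\gamma'(0)$ and $\nabla_sV(s_0)=\nabla_{\gamma'}J(0)$. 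Then $\bar\gamma(s,t)=\exp_{\alpha(s)}(tV(s))$ is a variation of $\gamma$ through geodesics with $\bar\gamma(s,0)=\alpha(s)$ and $\partial_t\bar\gamma(s,0)=V(s)\perp\alpha'(s)$; by the computation above its variation field is the Jacobi field with initial data $\alpha'(s_0)$ and $\nabla_{\gamma'}J(0)$, hence coincides with $J$ by uniqueness of Jacobi fields. As $J(1)=0$, this forces $\partial_s\bar\gamma(s_0,1)=0$, so $(s_0,v)$ is a critical point of $\exp|_{\nu(\alpha)}$ and $p=\bar\gamma(s_0,1)$ is a focal point of $\alpha$ along $\gamma$.

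I do not expect a genuine obstacle: the engine of the proof is the symmetry lemma $\nabla_t\partial_s=\nabla_s\partial_t$ and uniqueness of Jacobi fields with prescribed $1$-jet at a point. The one place to be careful is the bookkeeping of which data live at $\alpha(s_0)$ (namely $J(0)=\alpha'(s_0)$ and $\nabla_{\gamma'}J(0)\perp\alpha'(s_0)$) versus the endpoint condition $J(1)=0$, and verifying that the orthogonality $\nabla_{\gamma'}J(0)\perp\alpha'(s_0)$ is both forced by, and enough to build, a variation staying normal to $\alpha$; this reversibility is what upgrades the computation into a genuine ``if and only if''.
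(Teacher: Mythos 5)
Your proof is correct and follows essentially the same route as the paper: both directions run through the variation $\bar\gamma(s,t)=\exp_{\alpha(s)}(tV(s))$, the symmetry $\nabla_t\partial_s=\nabla_s\partial_t$, and (for the converse) uniqueness of Jacobi fields with prescribed initial data. Your version merely spells out a few details the paper leaves implicit — the explicit parallel-transport construction of $V(s)$, and the affine-function argument showing $J\perp\gamma'$ — but the underlying argument is the same.
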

\begin{proof}
	Let $p$ be a focal point of $\alpha$ and $\bar \gamma(s,t)$ be the above variation though geodesics associated to $p$. Let $T=\frac{\partial}{\partial t}\bar \gamma$ and $V=\frac{\partial}{\partial s}\bar \gamma$. Then $V\perp T$ and  
	$$\left< \nabla_{T}V,V \right>(s,0)=\left<\nabla_{V}T,V\right>(s,0)=-\left<T,\nabla_VV\right>(s,0)=0,$$ which implies that $\gamma=\bar\gamma_{s_0}$ and $J(t)=V(s_0,t)$ satisfies the requirement of Lemma \ref{lem-focal-radi-point}.
	
	Conversely, let $\gamma$ be a geodesic and $J$ a perpendicular Jacobi field along $\gamma$ in Lemma \ref{lem-focal-radi-point}, and let $T(s)$ be a vector field perpendicular to $\alpha$ such that $T(s_0)=\gamma'(0)$ and $\nabla_{\alpha'}T(s_0)=\nabla_{\gamma'}J(0)$. Then the variation $\bar{\gamma}(s,t)=\exp_{\alpha(s)}tT(s)$ satisfies $\frac{\partial}{\partial s}\bar{\gamma}(s_0,t)=J(t)$. Hence $p$ is a focal point of $\alpha$ along $\gamma$.
\end{proof}
	By Lemma \ref{lem-focal-radi-point}, we say that $p$ is \emph{focal to} $\alpha$ at $\alpha(0)$ along a geodesic $\gamma:[0,1]\to M$ if $p$ is a focal point of $\alpha$ along $\gamma(1-t)$.  Now it is clear that $\operatorname{foc}(p)\le R<\infty$ if and only if $p$ is focal to a minimal geodesic $\alpha$ along a unit-speed geodesic $\gamma:[0,R]\to M$. 
	
	The following elementary facts on (extended) focal radius are used in proving Proposition 2.1 and Theorem \ref{thm-counterexample}.
	\begin{lemma}\label{lem-elemen-focal} $ $
		\begin{enumerate}[(\ref{lem-elemen-focal}.1)]
			\item
			$\operatorname{foc}(p)\le\operatorname{foc_e}(p)\le \operatorname{conj}(p)$, and if $\operatorname{conj}(p)<\infty$, then $\operatorname{foc_e}(p)<\operatorname{conj}(p)$;
			\item Let $0<R<\operatorname{conj}(p)$ and $\exp_p^*g$ be the pull-back metric on the tangent space $T_pM$ from $M$. Then $r^2:(T_pM,\exp_p^*g)\to \mathbb R$, $r^2(v)=|v|^2$ is convex (respectively, strictly convex) on the open ball $B_{R}(0)$ if and only if $R\le\operatorname{foc_e}(p)$ (resp. $R\le \operatorname{foc}(p)$).
			\item $\operatorname{foc_e}(p)$ is upper semi-continuous and $\operatorname{foc}(p)$ is lower semi-continuous, i.e,
			$$\limsup_{p_i\to p}\operatorname{foc_e}(p_i)\le \operatorname{foc_e}(p),\quad\text{and}\quad
			\liminf_{p_i\to p}\operatorname{foc}(p_i)\ge \operatorname{foc}(p).$$
		\end{enumerate}	
	\end{lemma}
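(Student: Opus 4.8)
The plan is to prove the three items in turn, in the order (\ref{lem-elemen-focal}.1), (\ref{lem-elemen-focal}.2), (\ref{lem-elemen-focal}.3), since the first contains an elementary fact used in the second. For (\ref{lem-elemen-focal}.1): for a nontrivial normal Jacobi field $J$ with $J(0)=0$ along a unit-speed geodesic $\gamma$ from $p$, the function $|J|^2$ is smooth, positive away from $0$ and the conjugate distances, and $|J|(t)=t\,|\nabla_{\gamma'}J(0)|+O(t^3)$ near $0$, so $|J|$ is $C^1$ up to $0$ with $|J|'(0^+)=|\nabla_{\gamma'}J(0)|>0$. If $t_0$ is a conjugate distance along $\gamma$, then $|J|^2$ for the normal Jacobi field vanishing at $0$ and $t_0$ has an interior maximum on $[0,t_0]$, hence $|J|'$ has a zero and also a negative value in $(0,t_0)$; taking the infimum over $\gamma$ yields $\operatorname{foc}(p)\le\operatorname{conj}(p)$ and $\operatorname{foc_e}(p)\le\operatorname{conj}(p)$, with $\operatorname{foc_e}(p)<\operatorname{conj}(p)$ when $\operatorname{conj}(p)<\infty$ (by completeness a conjugate point is attained at distance $\operatorname{conj}(p)$). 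Finally, if $\operatorname{foc}(p)>\operatorname{foc_e}(p)$, pick $\operatorname{foc_e}(p)<T<\operatorname{foc}(p)$, then $T'$ with $\operatorname{foc_e}(p)\le T'<T$ and $\gamma,J$ with $|J|'(T')<0$; since $T'<\operatorname{foc}(p)$ there is no conjugate distance in $(0,T']$, so $|J|>0$ and $|J|$ is $C^1$ there, and the sign change of $|J|'$ between $0^+$ and $T'$ forces a zero in $(0,T')$, contradicting $T'<\operatorname{foc}(p)$; hence $\operatorname{foc}(p)\le\operatorname{foc_e}(p)$.

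Item (\ref{lem-elemen-focal}.2) is the core. Fix $0<R<\operatorname{conj}(p)$, so $\exp_p$ is a local isometry of $(B_R(0),\exp_p^*g)$ onto its image, and (strict) convexity of $r^2$ on the open ball $B_R(0)$ is equivalent to $\operatorname{Hess}_{\exp_p^*g}(r^2)$ being positive semidefinite (resp. positive definite) at every point of $B_R(0)$. For $w\in B_R(0)\setminus\{0\}$ and $X\in T_w(T_pM)\cong T_pM$, let $\sigma$ be the $\exp_p^*g$-geodesic with $\sigma(0)=w$, $\sigma'(0)=X$, and set $H(u,t)=\exp_p(t\,\sigma(u))$ for $t\in[0,1]$. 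Each $H(u,\cdot)$ is a geodesic of constant speed $|\sigma(u)|$, so $E(u):=\tfrac12\int_0^1|\partial_t H(u,t)|^2\,dt=\tfrac12 r^2(\sigma(u))$ and hence $\operatorname{Hess}_{\exp_p^*g}(r^2)_w(X,X)=2E''(0)$. In the second-variation formula both endpoint terms vanish---the initial one because $H(\cdot,0)\equiv p$, the terminal one because $\beta:=\exp_p\circ\sigma$ is a $g$-geodesic---so $2E''(0)=2I(V,V)$ is the index form of the Jacobi field $V=\partial_u H(0,\cdot)$, which has $V(0)=0$. Splitting $V=V^\top+V^\perp$, computing the tangential index contribution directly, integrating the normal one by parts, and using the Gauss lemma gives
\[ \operatorname{Hess}_{\exp_p^*g}(r^2)_w(X,X)=\frac{2\langle X,w\rangle^2}{|w|^2}+\rho\,\frac{d}{ds}\Big|_{s=\rho}|\hat J(s)|^2,\qquad\rho:=|w|, \]
where $\langle\cdot,\cdot\rangle$ is the inner product of $T_pM$ and $\hat J$ is the normal Jacobi field vanishing at $p$ along the unit-speed radial geodesic in the direction $w$, with $\hat J(\rho)$ corresponding under $(d\exp_p)_w$ to the normal part of $X$.

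As $X$ ranges over $T_pM$, $\hat J$ ranges over all normal Jacobi fields vanishing at $0$; and since $\rho<\operatorname{conj}(p)$, $\hat J(s)\ne 0$ for $s>0$, so $\tfrac{d}{ds}|\hat J|^2$ has the sign of $|\hat J|'$. Thus the Hessian is positive semidefinite (resp. positive definite) for every $|w|<R$ if and only if $|\hat J|'(s)\ge 0$ (resp. $|\hat J|'(s)>0$) for all $0<s<R$ and all unit-speed geodesics from $p$; invoking (\ref{lem-elemen-focal}.1) in the strict case to exclude a conjugate distance before the first zero of $|\hat J|'$, these are precisely $R\le\operatorname{foc_e}(p)$ and $R\le\operatorname{foc}(p)$, which proves (\ref{lem-elemen-focal}.2). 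For (\ref{lem-elemen-focal}.3) I would use continuous dependence of geodesics and Jacobi fields on initial data. If $p_i\to p$ and $\operatorname{foc_e}(p_i)\to L>\operatorname{foc_e}(p)$, choose $\operatorname{foc_e}(p)<T_0<L$ with a unit geodesic $\gamma$ from $p$ and a nontrivial normal Jacobi field $J$, $J(0)=0$, $|J|'(T_0)<0$ (so $J(T_0)\ne 0$); transplanting the data along $p_i\to p$ yields $\gamma_i,J_i$ with $|J_i|'(T_0)<0$ for large $i$, hence $\operatorname{foc_e}(p_i)\le T_0<L$, a contradiction. If $p_i\to p$ and $\operatorname{foc}(p_i)\to\ell<\operatorname{foc}(p)$, take unit geodesics $\gamma_i$ from $p_i$ and normal Jacobi fields $J_i$ with $J_i(0)=0$, $|\nabla_{\gamma_i'}J_i(0)|=1$, $|J_i|'(\operatorname{foc}(p_i))=0$, and pass to a subsequential limit $\gamma,J$; since $\ell<\operatorname{foc}(p)\le\operatorname{conj}(p)$ we have $J(\ell)\ne 0$, so $|J|$ is smooth near $\ell$ and $|J|'(\ell)=\lim_i|J_i|'(\operatorname{foc}(p_i))=0$, forcing $\operatorname{foc}(p)\le\ell$, a contradiction.

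The main obstacle is deriving the Hessian identity in (\ref{lem-elemen-focal}.2) with the right constants---in particular the rescaling between the speed-$|w|$ radial geodesic on $[0,1]$ and the unit-speed one, which produces the factor $\rho$---and checking that every normal Jacobi field vanishing at the origin is genuinely realized by some $X\in T_pM$, so that the pointwise Hessian conditions convert exactly into the definitions of $\operatorname{foc}$ and $\operatorname{foc_e}$ with no slack. Items (\ref{lem-elemen-focal}.1) and (\ref{lem-elemen-focal}.3) are soft, using only Rolle's theorem, the intermediate value theorem, and continuous dependence of solutions of the Jacobi equation on initial conditions.
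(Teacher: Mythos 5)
Your proposal is correct and follows essentially the same route as the paper, which justifies (\ref{lem-elemen-focal}.2) by exactly the identity $\operatorname{Hess} r^2(J,J)=r\langle J,J\rangle'$ that you derive via second variation, and disposes of (\ref{lem-elemen-focal}.1) and (\ref{lem-elemen-focal}.3) by the same Rolle-type and continuous-dependence observations that the paper states in one line each. The only point worth noting is that your equivalence between strict convexity and positive definiteness of $\operatorname{Hess} r^2$ tacitly uses the standard Riemannian convention for strict convexity (positive second derivative along every geodesic), which is the convention implicit in the paper as well.
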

	\begin{proof}
		(\ref{lem-elemen-focal}.1) is clear for any non-trivial Jacobi field vanishing at the start point admits $|J|'=0$ or $|J|'<0$ before $|J|$ becomes $0$ again. 
		
		(\ref{lem-elemen-focal}.2) follows from the fact that $\operatorname{Hess} r^2(J,J)=r\left\langle J, J\right\rangle'$ for any Jacobi field $J$ in the definition of $\operatorname{foc_e}(p)$.
		
		By definition, it is clear that (\ref{lem-elemen-focal}.3) holds.
	\end{proof}

	It will be seen from the proof of Theorem \ref{thm-counterexample}  that it may well happen  
	$$\limsup_{p_i\to p}\operatorname{foc_e}(p_i)< \operatorname{foc_e}(p)\quad\text{and}\quad \liminf_{q_i\to q}\operatorname{foc}(q_i)> \operatorname{foc}(q).$$
	In order to derive the upper bound in (\ref{prop-conv-focal-cutdecay}.2) and the discontinuity of concentric convexity radius in Theorem \ref{thm-counterexample}, the most important property of focal radius is the following.
	\begin{lemma}[\cite{Dib}]\label{lem-converg-foc-radi}
		If $\operatorname{foc}(p)<\infty$, then there is a sequence of $p_i$ converging to $p$ such that $$\lim_{i\to\infty}\operatorname{foc_e}(p_i)=\lim_{i\to\infty}\operatorname{foc}(p_i)=\operatorname{foc}(p).$$
	\end{lemma}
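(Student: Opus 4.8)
The plan is to reduce to a one-sided estimate. For \emph{any} sequence $p_i\to p$, lower semicontinuity of $\operatorname{foc}$ (Lemma \ref{lem-elemen-focal}(\ref{lem-elemen-focal}.3)) gives $\liminf_i\operatorname{foc}(p_i)\ge\operatorname{foc}(p)$, hence $\liminf_i\operatorname{foc_e}(p_i)\ge\operatorname{foc}(p)$ as well since $\operatorname{foc}\le\operatorname{foc_e}$; so it suffices to produce $p_i\to p$ with $\limsup_i\operatorname{foc_e}(p_i)\le\operatorname{foc}(p)$, for then $\operatorname{foc_e}(p_i)\to\operatorname{foc}(p)$ and the squeeze $\operatorname{foc}(p)\le\liminf_i\operatorname{foc}(p_i)\le\limsup_i\operatorname{foc}(p_i)\le\limsup_i\operatorname{foc_e}(p_i)=\operatorname{foc}(p)$ forces $\operatorname{foc}(p_i)\to\operatorname{foc}(p)$ too.

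Put $R=\operatorname{foc}(p)<\infty$ and fix realizing data: a unit-speed geodesic $\gamma\colon\mathbb R\to M$ (using completeness) with $\gamma(0)=p$, and a nontrivial normal Jacobi field $J$ along $\gamma$ with $J(0)=0$ and $|J|'(R)=0$. By Lemma \ref{lem-elemen-focal}(\ref{lem-elemen-focal}.1), $R\le\operatorname{foc_e}(p)\le\operatorname{conj}(p)$, with strict second inequality if $\operatorname{conj}(p)<\infty$; hence $R<\operatorname{conj}(p)$ in all cases. Consequently $J(R)\ne0$ (otherwise $\gamma(R)$ would be conjugate to $p$), so $|J|$ is smooth near $R$ and $\langle\nabla J,J\rangle(R)=|J|(R)\,|J|'(R)=0$; moreover $\exp_p$ is nonsingular at $R\gamma'(0)$, so evaluation at $R$ is a linear isomorphism from the space of normal Jacobi fields along $\gamma$ vanishing at $0$ onto $\gamma'(R)^{\perp}$. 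Set $a=\nabla J(0)\ne0$. The argument splits according to whether $\nabla J(R)=0$.

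\emph{Case $\nabla J(R)\ne0$.} Here I claim $\operatorname{foc_e}(p)=R$, so $p_i\equiv p$ works. Note $\nabla J(R)\in\gamma'(R)^{\perp}$ (since $J$ is normal and $\langle\nabla J,J\rangle(R)=0$), so by the isomorphism above there is a normal Jacobi field $J_1$ along $\gamma$ with $J_1(0)=0$ and $J_1(R)=\nabla J(R)$; set $J_\varepsilon=J+\varepsilon J_1$, again a normal Jacobi field vanishing at $0$. Since $J,J_1$ both vanish at $0$, their Wronskian $\langle\nabla J,J_1\rangle-\langle J,\nabla J_1\rangle$ vanishes identically; a short computation then gives $\langle\nabla J_\varepsilon,J_\varepsilon\rangle(R)=2\varepsilon|\nabla J(R)|^2+O(\varepsilon^2)$, which is negative for small $\varepsilon<0$. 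Thus $|J_\varepsilon|'(R)<0$, so $\operatorname{foc_e}(p)\le R$, i.e. $\operatorname{foc_e}(p)=\operatorname{foc}(p)$.

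\emph{Case $\nabla J(R)=0$.} This is the crux, handled by moving the base point \emph{backwards} along $\gamma$. For small $\varepsilon>0$ let $q_\varepsilon=\gamma(-\varepsilon)$, and let $H_\varepsilon$ be the normal Jacobi field along $s\mapsto\gamma(s-\varepsilon)$ with $H_\varepsilon(0)=0$ and $\nabla H_\varepsilon(0)$ equal to the parallel transport of $a$ along $\gamma$ from $\gamma(0)$ to $\gamma(-\varepsilon)$. Regarded as a Jacobi field $\widehat J_\varepsilon$ along $\gamma$ with $\widehat J_\varepsilon(-\varepsilon)=0$, smooth dependence of Jacobi fields on their data gives $\widehat J_\varepsilon(R)=J(R)-\varepsilon\psi(R)+O(\varepsilon^2)$ and $\nabla\widehat J_\varepsilon(R)=\nabla J(R)-\varepsilon\nabla\psi(R)+O(\varepsilon^2)$, where $\psi$ is the Jacobi field along $\gamma$ with $\psi(0)=-a$, $\nabla\psi(0)=0$. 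Evaluating $|H_\varepsilon|'$ at the parameter reaching $\gamma(R)$ (where $|H_\varepsilon|\approx|J(R)|>0$), and using $\langle\nabla J,J\rangle(R)=0$ together with $\nabla J(R)=0$, the leading term of $\langle\nabla\widehat J_\varepsilon,\widehat J_\varepsilon\rangle(R)$ is $-\varepsilon\langle\nabla\psi(R),J(R)\rangle$. But the Wronskian $\langle\nabla J,\psi\rangle-\langle J,\nabla\psi\rangle$ is constant, equal to its value at $0$, namely $\langle a,-a\rangle-0=-|a|^2$; evaluating at $R$ and using $\nabla J(R)=0$ gives $\langle\nabla\psi(R),J(R)\rangle=|a|^2$. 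Hence at that parameter $|H_\varepsilon|'\approx-\varepsilon|a|^2/|J(R)|<0$, so $\operatorname{foc_e}(q_\varepsilon)\le R+\varepsilon$; taking $p_i=\gamma(-1/i)$ finishes the proof. I expect this degenerate case to be the real obstacle: translating the foot point of $\gamma$ is the natural move, but moving \emph{forward} gives the first-order correction the wrong sign, so one is forced to move backward, and that this backward move yields a strictly negative (rather than vanishing) first-order correction is not automatic — it rests on the Wronskian identity pinning the coefficient to $-|a|^2$ with $a=\nabla J(0)\ne0$, leaving no room for a further degeneracy.
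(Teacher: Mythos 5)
Your proof is correct, and it takes a genuinely different route from the paper's. The paper reduces to the case $\operatorname{foc}(p)<\operatorname{inj}(p)$ by lifting to $(T_pM,\exp_p^*g)$, realizes $p$ via Lemma \ref{lem-focal-radi-point} as a focal point of a minimal geodesic $\alpha$ along a geodesic $\gamma$ of length $R$ ending at $p$, takes $p_i=\gamma(1+\epsilon)$ just beyond $p$, and certifies $\operatorname{foc_e}(p_i)\le R(1+\epsilon)$ by showing $d(p_i,\alpha(\cdot))$ is not convex: the auxiliary function $e=r_\alpha+r_\epsilon$ would have nonnegative Hessian, yet $\operatorname{Hess}r_\alpha(J,J)/|J|^2\to-\infty$ at the focal point while $\operatorname{Hess}r_\epsilon$ stays bounded. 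You instead work in the reversed parametrization (a normal Jacobi field vanishing at $p$ with $|J|'(R)=0$) and certify the negativity of $|J|'$ by a first-order perturbation of the Jacobi field itself, with the Wronskian identity pinning the sign of the leading coefficient to $-|a|^2$. Geometrically your perturbed points $\gamma(-\varepsilon)$ are the same as the paper's $\gamma(1+\epsilon)$ read backwards, but your certification is purely ODE-theoretic: it needs no lift to the tangent space, no curvature-dependent bound on $\operatorname{Hess}r_\epsilon$, and it isolates the nondegenerate case $\nabla J(R)\ne0$, where the perturbation $J+\varepsilon J_1$ already yields $\operatorname{foc_e}(p)=\operatorname{foc}(p)$ and the constant sequence suffices — a piece of information the paper's uniform argument does not surface. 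Two cosmetic points: your justification that $\nabla J(R)\perp\gamma'(R)$ should cite only the normality of $J$ (differentiate $\langle J,\gamma'\rangle\equiv0$); the condition $\langle\nabla J,J\rangle(R)=0$ is irrelevant there. And both your proof and the paper's tacitly use that the minimum in the definition of $\operatorname{foc}(p)$ is attained by some pair $(\gamma,J)$, so you are on equal footing with the paper on that point.
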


	\begin{proof}
		It suffices to show the case that $R=\operatorname{foc}(p)<\operatorname{inj}(p)$, for this is always true after lifting $p$ as well as the geodesic and Jacobi field to $T_pM$.
		According to Lemma \ref{lem-focal-radi-point}, there are a unit-speed minimal geodesic $\alpha:(-a,a)\to M$, a geodesic $\gamma:[0,1]\to M$ of length $R$ from $\alpha(0)$ to $p=\gamma(1)$, and a Jacobi field $J$ along $\gamma$ such that $p$ is a focal of $\alpha$ along $\gamma:[0,1]\to M$ with $\gamma(0)=\alpha(0)$, $\gamma(1)=p$, $J(0)=\alpha'(0)$ and $J(1)=0$. 
		
		Let $\epsilon>0$ small enough such that $R(1+\epsilon)<\operatorname{inj}(\alpha(0))$ and $\epsilon<\operatorname{inj}(q)$ for any $q$ nearby $p$.
		We claim that
		\begin{claim*}
			$d(\gamma(1+\epsilon),\alpha(s))$ is not a convex function of $s$.
		\end{claim*}
		Clearly it follows from the claim that  $\operatorname{foc_e}(\gamma(1+\epsilon))\le R(1+\epsilon)$. Hence by (\ref{lem-elemen-focal}.3) $$\lim_{\epsilon\to0}\operatorname{foc_e}(\gamma(1+\epsilon))=R,$$
		and thus 
		$$\lim_{\epsilon\to0}\operatorname{foc}(\gamma(1+\epsilon))=\operatorname{foc}(p).$$
				
		The claim is a consequence of the Morse index theorem, which implies that any geodesic normal to $\alpha$ stops realizing the distance to $\alpha$ after passing a focal point. 
		Here we give another direct proof, whose idea is similar to Lemma 5.7.8 in \cite{Pe16}. Let us argue by contradiction. Let $$r_\epsilon(x)=d(x,\gamma(1+\epsilon)), \quad 
		r_\alpha(x)=\inf_{s\in(-a,a)}d(x,\alpha(s))$$ be the distance functions to $\gamma(1+\epsilon)$ and $\alpha$ respectively, and let 
		$e(x)=r_{\alpha}(x)+r_{\epsilon}(x)$ be an auxiliary function. If $r_\epsilon\circ\alpha(s)$ is convex, then $$d_\alpha(\gamma(1+\epsilon))=r_\epsilon(\alpha(0))=R(1+\epsilon).$$ 
		It follows that $e(\gamma(t))=R(1+\epsilon)$.
		By the triangle inequality, for any $x$ around $\gamma(1)$, $$e(x)=r_{\alpha}(x)+r_{\epsilon}(x)\ge d_\alpha(\gamma(1+\epsilon))=e(\gamma(t)),$$
		which implies $\left.\operatorname{Hess}e\right|_{\gamma(t)}\ge 0$.
		
		In order to meet a contradiction, in the following we will show that  for any $0<t\to 1^-$,
		$$\left.\operatorname{Hess}e\right |_{\gamma(t)}(J,J)<0.$$
		Let $T=\gamma'(t)$. Since $\nabla_TJ(1)\neq 0$, by the Taylor extension of $|J|^2$ and $\left\langle \nabla_TJ,J\right\rangle$ at $t=1$,
		$$|J|^2(t)=(t-1)^2 \left\langle \nabla_TJ,\nabla_TJ\right\rangle+o\left((t-1)^2\right)$$
		$$\left\langle \nabla_TJ,J\right\rangle=(t-1)\left\langle \nabla_TJ,\nabla_TJ\right\rangle+o(t-1)$$
		one has
		$$\frac{\left\langle\nabla_{T}J,  J \right\rangle}{\left\langle J,J\right\rangle}(t)\to -\infty, \quad\text{as $t\to 1^-$}.$$
		Because 
		$$\operatorname{Hess}r_\alpha(J,J)=\frac{1}{R}\left\langle\nabla_{J}T,  J \right\rangle=\frac{1}{R}\left\langle\nabla_{T}J,  J \right\rangle(t)$$ and
		$\operatorname{Hess}r_\epsilon$ at $\gamma(t)$ is bounded above by a constant that depends on $\epsilon$ and sectional curvature of $M$,
		$$\left.\operatorname{Hess}e\right|_{\gamma(t)}(\frac{J}{|J|},\frac{J}{|J|})
		= \frac{1}{R}\frac{\left\langle\nabla_{T}J,  J \right\rangle}{\left\langle J,J\right\rangle}(t) + \left.\operatorname{Hess}r_{\epsilon}\right|_{\gamma(t)}(\frac{J}{|J|},\frac{J}{|J|})\to -\infty
		$$
		as $t\to 1^-$. 
	\end{proof}
	
	We are now ready to prove Proposition \ref{prop-conv-focal-cutdecay}.
	\begin{proof}[Proof of Proposition 2.1]~\newline
		\indent Let us first prove (2.1.1). Because the cut points connected by at least two minimal geodesics from $x$ is dense in $\operatorname{Cut}(x)$ (cf. \cite{Wolter1979Distance,Bishop1977Decomposition,Kl}), it is clear that $\operatorname{conv_{ct}}(p)\le R_c(p)$. By (2.4.2), $\operatorname{conv_{ct}}(p)\le \operatorname{foc_e}(p)$.
		
		Next, let us show $\operatorname{conv_{ct}}(p)\ge \min\{\operatorname{foc_e}(p), R_c(p) \}$, that is, for any $0<r<\min\{\operatorname{foc_e}(p), R_c(p) \}$, the open ball $B_r(p)$ is convex. For any $q\in B_r(p)$, let us consider the set 
		$$U_q=\{x\in B_r(p) : \text{ minimal geodesic between $x$ and $q$ in $M$ lies in $B_r(p)$}\}.$$ It is clear that $U_q\neq \emptyset$ is open.
		By (2.4.1) and (2.4.2), $d(p,\cdot)$ is convex in $B_r(p)$ and thus $d(p,\gamma(t))$ is a convex function for any minimal geodesic $\gamma$ in $B_r(p).$ Let $\gamma_i$ be a converging sequence of minimal geodesics such that each $\gamma_i$ lies in $B_r(p)$. Then $d(p,\gamma(t))$, the limit of $d(p,\gamma_i(t))$, is still convex. Hence $\gamma$ lies in $B_r(p)$ as long as its endpoints are in $B_r(p)$. Since the minimal geodesic between any two points in $B_r(p)$ is unique, it follows that $U_q$ is closed. By the connectedness of $B_r(p)$, $U_q=B_r(p)$. Therefore $B_r(p)$ is convex.
		
		Now it is easy to see (2.1.2) holds. Indeed, the upper bound in (2.1.2) follows from the continuity of $\operatorname{conv}$ and Lemma \ref{lem-converg-foc-radi}. It is clear that for any $x\in M$ and any $r>0$ such that $B_r(x)\subset B_{\min\{\operatorname{foc}(B_{R_c(p)}(p)), R_c(p)\}}(p)$,  $R_c(x)\ge r$. By (2.1.1) $B_r(x)$ is convex. Hence the lower bound in (2.1.2) holds.
	\end{proof}
	
	Note that it happens that $\operatorname{foc}(p)<\operatorname{foc_e}(p)$ (see Theorem \ref{thm-counterexample-euclidean}) and $\operatorname{foc}(p)>\frac{1}{2}\operatorname{conj}(p)$. Such inequalities, however, globally won't happen and the followings hold.
	\begin{lemma}[\cite{Dib}]\label{lem-global-efoc-conj} $ $
		\begin{enumerate}[(\ref{lem-global-efoc-conj}.1)]
		\item
		Let $U$ be a open set of $M$, then
		$\operatorname{foc_e}(U)=\operatorname{foc}(U)$.
		\item
		$\operatorname{foc}(M)\le \frac{1}{2}\operatorname{conj}(M).$
	\end{enumerate}
	\end{lemma}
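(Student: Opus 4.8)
The plan is to prove the two assertions separately. Part~(\ref{lem-global-efoc-conj}.1) will follow formally from Lemma~\ref{lem-converg-foc-radi} together with the semicontinuity statements in~(\ref{lem-elemen-focal}.3); part~(\ref{lem-global-efoc-conj}.2) needs a short ``midpoint'' argument that turns a conjugate pair into a focal configuration at roughly half the distance.

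For~(\ref{lem-global-efoc-conj}.1): the inequality $\operatorname{foc}(U)\le\operatorname{foc_e}(U)$ is immediate from~(\ref{lem-elemen-focal}.1) by taking infima over $U$. For the reverse, if $\operatorname{foc}(U)=\infty$ then $\operatorname{foc}(q)=\infty$, hence $\operatorname{foc_e}(q)=\infty$, for every $q\in U$, so $\operatorname{foc_e}(U)=\infty$. If $\operatorname{foc}(U)<\infty$, I fix $\epsilon>0$ and choose $p\in U$ with $\operatorname{foc}(p)<\operatorname{foc}(U)+\epsilon$; in particular $\operatorname{foc}(p)<\infty$, so Lemma~\ref{lem-converg-foc-radi} produces $p_i\to p$ with $\operatorname{foc_e}(p_i)\to\operatorname{foc}(p)$. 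Since $U$ is open, $p_i\in U$ for $i$ large, whence $\operatorname{foc_e}(U)\le\operatorname{foc_e}(p_i)\to\operatorname{foc}(p)<\operatorname{foc}(U)+\epsilon$; letting $\epsilon\to0$ gives $\operatorname{foc_e}(U)\le\operatorname{foc}(U)$, hence equality.

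For~(\ref{lem-global-efoc-conj}.2): I may assume $\operatorname{conj}(M)<\infty$. Given $\epsilon>0$, by definition of $\operatorname{conj}(M)$ there is a unit-speed geodesic $\gamma\colon[0,L]\to M$ with $L<\operatorname{conj}(M)+\epsilon$ and a nontrivial Jacobi field $J$ along $\gamma$ with $J(0)=J(L)=0$ (take $\gamma(L)$ a conjugate point of $\gamma(0)$ at distance close to $\operatorname{conj}(M)$). Two elementary remarks drive the argument: first, $J$ is automatically normal to $\gamma$, since $t\mapsto\langle J(t),\gamma'(t)\rangle$ is affine and vanishes at both endpoints; second, $f(t)=|J(t)|$ is not identically zero and vanishes at the endpoints, so it attains a positive maximum at some interior point $t_0\in(0,L)$, where $J(t_0)\neq0$, $f$ is smooth, and $f'(t_0)=0$. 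Set $\rho=\min\{t_0,\,L-t_0\}\le L/2$. If $\rho=t_0$, then $\gamma|_{[0,t_0]}$ and $J|_{[0,t_0]}$ (normalized so that $|J(t_0)|=1$) form a nontrivial normal Jacobi field along a unit-speed geodesic issuing from $p:=\gamma(0)$, vanishing at $0$ with $|J|'(t_0)=0$; choosing $\alpha$ to be a short (hence minimal) geodesic through $\gamma(t_0)$ with initial velocity $J(t_0)\perp\gamma'(t_0)$, Lemma~\ref{lem-focal-radi-point} and the characterization of focal radius recorded just after it show that $p$ is focal to $\alpha$ along a geodesic of length $t_0$, so $\operatorname{foc}(p)\le t_0=\rho$. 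If $\rho=L-t_0$, applying the same reasoning to the reversed geodesic $s\mapsto\gamma(L-s)$ on $[0,L-t_0]$ and the Jacobi field $s\mapsto J(L-s)$, which vanishes at $s=0$ and whose norm has derivative $-f'(t_0)=0$ at $s=L-t_0$, gives $\operatorname{foc}(\gamma(L))\le L-t_0=\rho$. In either case $\operatorname{foc}(M)\le\rho\le L/2<\tfrac12\operatorname{conj}(M)+\tfrac12\epsilon$, and letting $\epsilon\to0$ yields~(\ref{lem-global-efoc-conj}.2).

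The only delicate point is the bookkeeping in part~(\ref{lem-global-efoc-conj}.2): one must carefully match the data ``normal Jacobi field vanishing at one end, with $|J|'=0$ at the other end'' to the definition of a focal point, keeping track of the time-reversal between ``$p$ is a focal point of $\alpha$'' and ``$p$ is focal to $\alpha$'' in Lemma~\ref{lem-focal-radi-point}, and check that $J(t_0)$ is indeed realizable as the initial velocity of a genuine minimal geodesic based at $\gamma(t_0)$ and perpendicular to $\gamma$. Everything else is routine.
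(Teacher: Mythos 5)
Your proof is correct and follows essentially the same route as the paper: part (1) is exactly the paper's application of Lemma \ref{lem-converg-foc-radi} (with the semicontinuity bookkeeping made explicit), and part (2) is the paper's reduction to the inequality $\operatorname{foc}(p)+\operatorname{foc}(q)\le\operatorname{length}(\gamma)$ for a conjugate pair, which the paper asserts ``by definition'' and you verify via the interior critical point of $|J|$. The only difference is that you spell out that verification, which is a welcome addition rather than a deviation.
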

	\begin{proof}
	By Lemma \ref{lem-converg-foc-radi}, it is clear that for any open set $U\subset M$,
	$$\operatorname{foc_e}(U)=\inf_{p\in U} \{\operatorname{foc_e}(p)\}=\operatorname{foc}(U).$$
	If $p$ and $q$ are conjugate along a geodesic $\gamma$, then 
	by definition $$\operatorname{foc}(p)+\operatorname{foc}(q)\le \operatorname{length}(\gamma).$$ By approximating $\operatorname{conj}(M)$ by $p,q$ and $\gamma$, it follows that (see \cite{Dib})
	$$\operatorname{foc}(M)\le \frac{1}{2}\operatorname{conj}(M).$$
	\end{proof}
	
	Assuming Theorem \ref{thm-conv-lowerbound}, we give a proof of Corollary \ref{cor-Dib-global}.
	\begin{proof}[Proof of Corollary \ref{cor-Dib-global}] $ $\\
	\indent Firstly, by Theorem \ref{thm-conv-lowerbound} we have
	$\operatorname{conv}(M)\ge \min\{\operatorname{foc}(M),\frac{1}{2}\operatorname{inj}(M)\}.$
	
	Secondly, for any $\epsilon>0$, by the continuity of $\operatorname{inj}(p)$, there is $p\in M$ such that $\operatorname{inj}(p)<\operatorname{inj}(M)+\epsilon$. Let $q$ be the midpoint of a minimal geodesic at $p$ which realizes the distance from $p$
	to its cut points, then $R_c(q)\le \frac{1}{2}\left(\operatorname{inj}(M)+\epsilon\right)$.
	Now by Proposition \ref{prop-conv-focal-cutdecay}, $$	\operatorname{conv}(M)\le \operatorname{conv}(q)\le R_c(q)\le \frac{1}{2} \left(\operatorname{inj}(M)+\epsilon\right),$$
	and $\operatorname{conv}(M)\le \operatorname{foc}(M)$. Therefore 
	$\operatorname{conv}(M)\le  \min\{\operatorname{foc}(M),\frac{1}{2}\operatorname{inj}(M)\}.$
	\end{proof}

	\section{Generalized Klingenberg's Lemma}
	In this section we first summarize recent developments 
	(\cite{Xu2016On,Innami2012The}) on a generalized version of Klingenberg' lemma with simplified proofs presented, then we prove Theorem \ref{thm-conv-lowerbound}, \ref{thm-inj-lipdecay} and \ref{thm-geodesic-length}. Let us first recall Klingenberg's lemma, which has been a basic fact in Riemannian geometry. 
	\begin{lemma}[Klingenberg, \cite{Kl,ChEb}]\label{lem-orig-kling}
		Let $z$ be a local nearest cut point of $p$ in $\operatorname{Cut}(p)$. If $z$ is not a singular point of $\operatorname{exp}_p$, then there are exactly two minimal geodesics connecting $p$ and $z$ that form a geodesic loop at $p$.
	\end{lemma}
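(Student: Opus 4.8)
\emph{Proof proposal.}
The plan is to argue by contradiction, exploiting that $z$ realizes a local minimum of $d(p,\cdot)$ on $\operatorname{Cut}(p)$: if the conclusion fails, I will produce a cut point of $p$ lying arbitrarily close to $z$ but strictly closer to $p$. First I set up the local picture. Since $z\in\operatorname{Cut}(p)$ and $z$ is not conjugate to $p$ along any minimal geodesic, $p$ and $z$ are joined by at least two minimal geodesics; write $\ell=d(p,z)$, let $v_1,\dots,v_k$ $(k\ge 2)$ be the length-$\ell$ preimages of $z$ under $\exp_p$, and let $\gamma_i$ be the corresponding minimal geodesics, with distinct unit terminal velocities $X_i\in T_zM$. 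Because $z$ is non-conjugate, $\exp_p$ is a local diffeomorphism near each $v_i$, so the family $\{v_i\}$ is finite and, on a small ball $U\ni z$, one obtains smooth ``sheets'' $\rho_i$ of the distance function with $\rho_i(z)=\ell$ and $\nabla\rho_i(z)=X_i$; shrinking $U$, one has $d(p,\cdot)=\min_i\rho_i$ on $U$, and a point $x\in U$ lies in $\operatorname{Cut}(p)$ exactly when $\min_i\rho_i(x)$ is attained by at least two indices (both facts use only the finiteness and non-conjugacy just noted).

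Now suppose the conclusion fails, so the configuration $\{X_l\}$ is not a single antipodal pair. I claim there is then a unit $w\in T_zM$ and two indices $i_1\neq i_2$ with $\langle w,X_{i_1}\rangle=\langle w,X_{i_2}\rangle=\min_l\langle w,X_l\rangle<0$, this minimum being attained by no other index; granting this, $\Sigma=\{\rho_{i_1}=\rho_{i_2}\}$ is (since $X_{i_1}\neq X_{i_2}$) a smooth hypersurface through $z$ with $T_z\Sigma=(X_{i_1}-X_{i_2})^\perp\ni w$, and moving from $z$ along $\Sigma$ with initial velocity $w$ yields points $x(s)$, $s>0$ small, with $\rho_{i_1}(x(s))=\rho_{i_2}(x(s))=\ell+s\langle w,X_{i_1}\rangle+o(s)<\ell$ while every other $\rho_l(x(s))=\ell+s\langle w,X_l\rangle+o(s)$ is strictly larger. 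Hence $d(p,x(s))=\rho_{i_1}(x(s))<\ell$, and $x(s)$ carries the two distinct minimal geodesics continuing $\gamma_{i_1},\gamma_{i_2}$, so $x(s)\in\operatorname{Cut}(p)$, contradicting that $z$ is a local nearest cut point. For the claim: if no such $w$ existed, then for every $w$ with $\min_l\langle w,X_l\rangle<0$ the minimizing index would be unique; but then the open set $\{u:\langle u,X_l\rangle<0\text{ for some }l\}\subset T_zM$ would be a disjoint union of the (open) ``nearest-$X_l$'' cells, forcing it to be disconnected, which happens only for a single antipodal pair — so in the remaining cases it is connected and meets two of these cells, hence contains a common boundary point of two cells at which $\min_l\langle\cdot,X_l\rangle<0$.

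The contradiction forces every two minimal geodesics from $p$ to $z$ to have opposite terminal velocities, so $\gamma_1$ followed by the reverse of $\gamma_2$ is a single geodesic through $z$ with both ends at $p$ — a geodesic loop at $p$; a third minimal geodesic $\gamma_3$ is then impossible, since $X_3=-X_1=-X_2$ would give $X_1=X_2$ and hence $\gamma_1=\gamma_2$. I expect the main obstacle to be the step certifying that the perturbed point $x(s)$ genuinely lies in $\operatorname{Cut}(p)$: one must prevent a third sheet $\rho_l$ from dipping below $\rho_{i_1}=\rho_{i_2}$ along $\Sigma$, and it is precisely to control this — rather than using a naive shortening direction — that one chooses a direction $w$ realizing a tie at the minimum; the local-nearest-cut-point hypothesis enters here, together with the routine reductions (finiteness of the family of minimal geodesics and $d(p,\cdot)=\min_i\rho_i$ near $z$). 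Alternatively, the whole lemma is subsumed by the generalized Klingenberg's Lemma applied with $q=p$: then $P_{p;p}=2\,d(p,\cdot)$, $z$ minimizes it over $\operatorname{Cut}(p)$, and the ``at most two'' bound together with the extension property immediately supply the second minimal geodesic with opposite terminal velocity.
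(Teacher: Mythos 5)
Your route is genuinely different from the paper's. The paper does not prove Lemma 3.1 in isolation: it is subsumed by the Improved Klingenberg Lemma (Lemma 3.2), whose proof joins the two minimal geodesics $\alpha(t)$ and $\beta(t)$ by connecting geodesics $\gamma_t$, uses the perimeter estimate to show $\gamma_t$ stays strictly inside $B_{d(p,z)}(p)$ unless the terminal velocities are opposite (hence avoids $\operatorname{Cut}(p)$ by the local minimality of $z$), and then lifts $\gamma_t$ through a local inverse of $\exp_p$ near $\alpha'(0)$ to force the contradiction $t\beta'(0)\in W$. You instead run the classical ``sheets of the distance function plus first variation'' argument, and you correctly identify the point where the naive version of that argument breaks in the \emph{local} setting: shortening two sheets simultaneously is not enough, one must certify that the perturbed point is still a cut point near $z$, which is why you insist on a direction $w$ realizing a two-fold tie at the minimum of $\langle w,X_l\rangle$. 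That is a legitimate alternative strategy (and your closing remark, that the statement also follows from the generalized lemma with $q=p$, matches the paper's own observation that Lemma 3.3 reduces to Lemma 3.2 when $p=q$).

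There is, however, a genuine gap in your justification of the key claim. You assert that if no unit vector $w$ has \emph{exactly} two indices attaining $\min_l\langle w,X_l\rangle<0$, then for every $w$ with negative minimum the minimizing index is unique. That is not the negation of your claim: the minimum could be attained by three or more indices. In that case the set $N=\{w:\min_l\langle w,X_l\rangle<0\}$ is not the disjoint union of the open ``nearest-$X_l$'' cells --- the higher-order tie locus is left over --- and the connectedness argument does not immediately apply. The claim itself is true and the hole is repairable: the locus where three or more sheets tie is contained in finitely many linear subspaces of codimension two (three distinct unit vectors are never collinear, so $(X_i-X_j)^\perp\cap(X_j-X_k)^\perp$ has codimension two), and removing such a set from the connected open set $N$ (for $n\ge 2$, non-antipodal configuration) leaves it connected, after which your disjoint-open-cells contradiction goes through; alternatively one can argue via the normal fan of $\operatorname{conv}\{-X_1,\dots,-X_k\}$, whose codimension-one cones correspond exactly to edges, i.e.\ to two-fold ties. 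As written, though, this step is missing, and it is precisely the step your whole construction of $\Sigma=\{\rho_{i_1}=\rho_{i_2}\}$ hinges on. The remaining ingredients (finiteness of the minimal geodesics at a non-conjugate cut point, $d(p,\cdot)=\min_i\rho_i$ near $z$, the characterization of nearby cut points by multiple minimizing sheets, and the first-order expansion along $\Sigma$) are standard and correctly deployed.
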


 In order to conclude a nontrivial geodesic loop, Lemma \ref{lem-orig-kling} requires the existence of at least two minimal geodesics along which $p$ and $z$ are not conjugate (see \cite{Kl,ChEb,Pe16}). In \cite{Xu2016On} we showed that assuming only one of such minimal geodesic it is also enough to get the same conclusion. Therefore Lemma \ref{lem-orig-kling} was improved to the following form.
	\begin{lemma}[Improved Klingenberg's Lemma, \cite{Xu2016On}]\label{lem-klingenberg-improved}
		Let $z$ be a local nearest point of $p$ in $\operatorname{Cut}(p)$. If there is a minimal geodesic $\alpha:[0,1]\to M$ from $p=\alpha(0)$ to $z=\alpha(1)$ along which they are not conjugate, then up to a reparametrization there are exactly two minimal geodesics between $p$ and $z$, and they form a geodesic loop at $p$. 
	\end{lemma}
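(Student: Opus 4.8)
The plan is to read off Lemma~\ref{lem-klingenberg-improved} from the generalized Klingenberg's lemma (Lemma~\ref{lem-gene-kling}) by specializing $q=p$. For $q=p$ the perimeter function is $P_{p;p}(x)=2\,d(p,x)$, so a point minimizing $P_{p;p}$ on $\operatorname{Cut}(p)$ — whether locally or globally — is exactly a nearest cut point of $p$, and our $z$ plays the role of $x_0$ in Lemma~\ref{lem-gene-kling}. Two small points must be noted: the conclusion of Lemma~\ref{lem-gene-kling} is local in nature, since its proof only inspects cut points of $q$ in an arbitrarily small neighbourhood of $x_0$, so a \emph{local} nearest cut point is enough; and the non-conjugacy hypothesis is used only along the single minimal geodesic that is extended, which here is $\alpha$ — this is precisely the strengthening recorded in \cite{Xu2016On}. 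Granting this, Lemma~\ref{lem-gene-kling} gives: up to reparametrization there are at most two minimal geodesics from $p$ to $z$, and each admits an extension at $z$ that is a minimal geodesic from $z$ back to $p$.

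The statement then follows quickly. Parametrize $\alpha$ with unit speed on $[0,L]$, $L=d(p,z)$. Since $\alpha$ is one of the minimal geodesics above, it extends past $z$ to a unit-speed geodesic $\widetilde\alpha:[0,2L]\to M$ with $\widetilde\alpha|_{[0,L]}=\alpha$ and $\widetilde\alpha|_{[L,2L]}$ a minimal geodesic from $z$ to $\widetilde\alpha(2L)=p$; in particular $\widetilde\alpha$ is a geodesic loop at $p$ of length $2L$ through $z=\widetilde\alpha(L)$. The reversal $\beta(t)=\widetilde\alpha(2L-t)$ is again a minimal geodesic from $p$ to $z$, and $\beta\neq\alpha$ since $\beta'(L)=-\alpha'(L)\neq\alpha'(L)$. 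Hence there are at least two, and therefore exactly two, minimal geodesics from $p$ to $z$, namely $\alpha$ and $\beta$, and they form the geodesic loop $\widetilde\alpha$.

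If one prefers a proof not invoking Lemma~\ref{lem-gene-kling} (the route of \cite{Xu2016On, Innami2012The}), I would argue directly that \emph{the extension of $\alpha$ past $z$ is already a minimal geodesic from $z$ to $p$}, and then finish as in the previous paragraph. Non-conjugacy along $\alpha$ provides a smooth family $\alpha_x$ of geodesics from $p$ to $x$ for $x$ near $z$. Setting $y_s=\widetilde\alpha(L+s)$ and choosing a minimal geodesic $\mu_s$ from $p$ to $y_s$, one first notes $\mu_s\to\mu\neq\alpha$ up to subsequences — otherwise $\mu_s$ would coincide with $\alpha_{y_s}=\widetilde\alpha|_{[0,L+s]}$, which has length $L+s$, impossible since that arc is not minimal ($z$ being the cut point of $p$ along $\alpha$). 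One then shows $\mu'(L)=-\alpha'(L)$: equivalently $d(p,y_s)=L-s$ for small $s>0$, which expresses that some minimal geodesic from $p$ to $z$ leaves $z$ in the direction $\alpha'(L)$.

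The main obstacle is exactly that last step, the point at which the classical proof of Lemma~\ref{lem-orig-kling} requires two non-conjugate minimal geodesics: there one perturbs $z$ in a direction obtuse to the incoming velocities of \emph{both} geodesics and lands strictly inside the injectivity radius, contradicting uniqueness of short geodesics; with only one non-conjugate geodesic there is no second smooth competitor, so one must instead track the minimal geodesics $\mu_s$ to the nearby points $y_s$ and the cut point of $p$ along each $\mu_s$, which depends on $s$ only semicontinuously. Showing that this cut point does not escape to distance $\ge L$ as $\mu_s\to\mu$ — so that, if $\mu'(L)\neq-\alpha'(L)$, a cut point of $p$ strictly nearer than $z$ would appear arbitrarily close to $z$, contradicting that $z$ is a local nearest cut point — is the technical core of \cite{Xu2016On}. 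Once that is in place (or once Lemma~\ref{lem-gene-kling} is granted), the extraction of "exactly two minimal geodesics forming a loop" carried out above is routine.
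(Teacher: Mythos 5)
Your primary route---reading the statement off as the $q=p$ case of the generalized Klingenberg's lemma---is valid but genuinely different from the paper's, which proves this lemma \emph{directly} as a warm-up before the general case. The reduction is not circular: although the paper's proof of Lemma~\ref{lem-klingenberg-generalized} refers back to ``the same argument as in the proof of Lemma~\ref{lem-klingenberg-improved}'', what it borrows is only the self-contained perimeter/triangle-inequality computation, not the lemma itself. Your two preparatory observations (a local minimizer of $P_{p;p}=2d(p,\cdot)$ on $\operatorname{Cut}(p)$ is exactly a local nearest cut point, and non-conjugacy is needed only along the one geodesic $\alpha$) match the precise statement of Lemma~\ref{lem-klingenberg-generalized}, and your extraction of ``exactly two minimal geodesics forming a loop'' from ``at most two, each extending past $z$ to a minimal geodesic back to $p$'' is correct: the reversal of the extension of $\alpha$ is a second minimal geodesic with velocity $-\alpha'(L)$ at $z$. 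What the reduction buys is brevity; what it costs is that all the substance now sits inside the general lemma, which you do not prove. Your fallback direct sketch correctly locates the difficulty (with only one non-conjugate minimal geodesic the classical two-sided perturbation of $z$ is unavailable), but the mechanism you propose---pushing past $z$ along $\widetilde\alpha$ and tracking the cut points of $p$ along the minimizers $\mu_s$ to $y_s$---is not the one the paper uses, and you leave its key step unproven. The paper instead takes the second minimal geodesic $\beta$ (which exists because $z\in\operatorname{Cut}(p)$ is not conjugate to $p$ along $\alpha$), joins $\alpha(t)$ to $\beta(t)$ by short minimal geodesics $\gamma_t$, shows $F=2d(\cdot,p)$ is strictly below $F(z)$ along $\gamma_t$ unless $\alpha'(1)=-\beta'(1)$, concludes from local minimality that $\gamma_t$ avoids $\operatorname{Cut}(p)$, and gets a contradiction by lifting $\gamma_t$ through $\exp_p$ into a neighborhood $W$ of $\alpha'(0)$: the lift's endpoint would have to be $t\cdot\beta'(0)\in W$, which is impossible for $t$ near $1$ since $\beta'(0)\notin W$. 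If a self-contained proof is wanted, that lifting argument is the piece you still need to supply.
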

Because Lemma \ref{lem-klingenberg-improved} is a special case of the generalized version for two points and their proofs enjoy similar ideas, we give a short proof of the simple case before going into the general one.

	\begin{proof}
		Let us consider the function $F:M\to \mathbb{R}$, $F(x)=2d(x,p)$. Then $z$ is a local minimum of $\left.F\right|_{\operatorname{Cut}(p)}$. Because $p,z$ are not conjugate along $\alpha$, there is another minimal geodesic $\beta:[0,1]\to \mathbb{R}$ from $p$ to $z$. For any $t\in (0,1)$, let $\gamma_t:[0,1]\to M$ be a minimal geodesic connecting $\alpha(t)$ and $\beta(t)$. Then 
		\begin{align*}
		F(\gamma_t(s))&= d(\gamma_t(s),p)+d(\gamma_t(s),p)\\
		&\le d(\gamma_t(s),\alpha(t))+d(\alpha(t),p)+d(\gamma_t(s),\beta(t))+d(\beta(t),p)\\
		&=d(\alpha(t),\beta(t))+d(\alpha(t),p)+d(\beta(t),p)\\
		&\le d(\alpha(t),z)+d(\beta(t),z)+d(\alpha(t),p)+d(\beta(t),p) \tag{$*$}\\
		&\le 2d(p,z)\\
		&=F(z)
		\end{align*}
		Note that $(*)$ is strict if and only if tangent vector $\alpha'(1)\neq -\beta'(1)$. Assume that $\alpha'(1)\neq -\beta'(1)$, then $F(\gamma_t(s))<F(z)$, which implies that there is $\epsilon_1>0$ such that for any $1-\epsilon_1<t<1$, $\gamma_t(s)$ is not a cut point of $p$ ($0\le s\le 1$). Take open neighborhoods $U$ of $z$ in $M$ and $W$ of $\alpha'(0)$ in $T_pM$ such that $\left.\exp_p\right|_W:W\to U$ is a diffeomorphism. Then there is $\epsilon_2>0$ such that for $1-\epsilon_2<t<1$, $\gamma_t$ lies in $U$. For $1-\min\{\epsilon_1,\epsilon_2\}<t<1$, let $\tilde\gamma_t$ be the lift of $\gamma_t$ in $W\subset T_pM$. Because the curve $\gamma_t$ has no intersection with $\operatorname{Cut}(p)$, the endpoint of $\tilde\gamma_t$ satisfies 
		$$\tilde\gamma_t(1)=t\cdot \beta'(0)\in W, \quad\text{for any $1-\min\{\epsilon_1,\epsilon_2\}<t<1$}.$$ However, $\beta'(0)\not\in W$, which implies that there is $\epsilon_3>0$ such that $$t\cdot\beta'(0)\not\in W, \quad \text{for any  $1-\epsilon_3<t\le 1$}.$$ We meet a contradiction for any  $1-\min\{\epsilon_1,\epsilon_2,\epsilon_3\}<t<1$.
		
		In particular, the above arguments imply that any minimal geodesic $\beta:[0,1]\to M$ other than $\alpha:[0,1]\to M$ satisfies $\beta'(0)=-\alpha'(0)$. Hence the minimal geodesic between $p$ and $z$ other than $\alpha$ is unique.
	\end{proof}
	As first observed in \cite{Innami2012The}, Klingenberg's lemma can be generalized to the case of two points. In the following we reformulate the local version in \cite{Xu2016On} into the form of Lemma \ref{lem-gene-kling} in the introduction. Let $p,q\in M$ be two fixed points. For any $x\in M$, let $P_{p;q}(x)$ be the perimeter of geodesic triangle $\triangle(pqx)$, i.e.,  $$P_{p;q}:M\to \mathbb{R},
	\quad P_{p;q}(x)=d(p,x)+d(x,q)+d(p,q).$$
	\begin{lemma}[Generalized Klingenberg's Lemma, \cite{Xu2016On,Innami2012The}]\label{lem-klingenberg-generalized}
		Let $x_0\in \operatorname{Cut}(q)$ be a local minimum point of the perimeter function $\left.P_{p;q}\right|_{\operatorname{Cut}(q)}$ in $\operatorname{Cut}(q)$. If there is a minimal geodesic from $q$ to $x_0$ along which $q$ and $x_0$ are not conjugate, then up to a reparametrization, there are at most $2$ minimal geodesics from $p$ to $x_0$, and each of them admits an extension at $x_0$ that is a minimal geodesic from $x_0$ to $q$. More precisely, if $p\neq x_0$ and $\alpha_1:[0,1]\to M$ is a minimal geodesic from $q=\alpha_1(0)$ to $x_0=\alpha_1(1)$ such that $d\exp_q$ is non-degenerated at $\alpha_1'(0)\in T_qM$, then
		\begin{enumerate}[(\ref{lem-klingenberg-generalized}.1)]
			\item either there is a unique minimal geodesic $\beta_1:[0,1]\to M$ from $p$ to $x_0$, and it satisfies $$\frac{1}{|\beta_1'(1)|}\cdot \beta_1'(1)=-\frac{1}{|\alpha_1'(1)|}\cdot \alpha_1'(1);$$
			\item or there are a unique minimal geodesic $\beta_1:[0,1]\to M$ from $p$ to $x_0$ and exactly two minimal geodesics $\alpha_1,\alpha_2:[0,1]\to M$ connecting $q$ and $x_0$, and they satisfy
			$$\frac{1}{|\beta_1'(1)|}\cdot \beta_1'(1)=-\frac{1}{|\alpha_2'(1)|}\cdot \alpha_2'(1);$$
			\item or there are exactly two minimal geodesics $\beta_i:[0,1]\to M$ from $p$ to $x_0$ and exactly two minimal geodesic $\alpha_i:[0,1]\to M$ from $q$ to $x_0$, and up to a permutation of $\beta_i$ ($i=1,2$) they satisfy
			$$\frac{1}{|\beta_1'(1)|}\cdot \beta_1'(1)=-\frac{1}{|\alpha_1'(1)|}\cdot \alpha_1'(1), \quad \frac{1}{|\beta_2'(1)|}\cdot \beta_2'(1)=-\frac{1}{|\alpha_2'(1)|}\cdot \alpha_2'(1).$$ 
		\end{enumerate}
	\end{lemma}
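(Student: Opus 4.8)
The plan is to mimic, for two points, the contradiction argument used in the proof of the Improved Klingenberg's Lemma, replacing the function $F(x)=2d(x,p)$ by the perimeter function $P_{p;q}$. Assume $p\neq x_0$ (the case $p=x_0$ reduces to the one-point situation already handled) and let $\alpha_1:[0,1]\to M$ be a minimal geodesic from $q$ to $x_0$ along which $q$ and $x_0$ are not conjugate, i.e.\ $d\exp_q$ is non-degenerate at $\alpha_1'(0)$. First I would record the basic variational inequality: if $\alpha:[0,1]\to M$ is \emph{any} minimal geodesic from $q$ to $x_0$ and $\beta:[0,1]\to M$ is \emph{any} minimal geodesic from $p$ to $x_0$, then for $t$ close to $1$ and a minimal geodesic $\gamma_t$ joining $\alpha(t)$ to $\beta(t)$, the triangle inequality gives
\begin{align*}
P_{p;q}(\gamma_t(s))&=d(p,\gamma_t(s))+d(\gamma_t(s),q)+d(p,q)\\
&\le d(p,\beta(t))+d(\beta(t),\gamma_t(s))+d(\gamma_t(s),\alpha(t))+d(\alpha(t),q)+d(p,q)\\
&\le d(p,\beta(t))+d(\beta(t),x_0)+d(x_0,\alpha(t))+d(\alpha(t),q)+d(p,q)\\
&=P_{p;q}(x_0),
\end{align*}
and the crucial point, exactly as in the one-point proof, is that the second inequality is \emph{strict} unless $\beta'(1)/|\beta'(1)|=-\alpha'(1)/|\alpha'(1)|$, i.e.\ unless $\alpha$ and $\beta$ fit together smoothly at $x_0$.

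Next I would exploit the non-degeneracy hypothesis on $\alpha_1$ to run the lifting contradiction. Suppose some minimal geodesic $\beta$ from $p$ to $x_0$ does \emph{not} form a smooth extension of $\alpha_1$ at $x_0$; then by the displayed computation $P_{p;q}(\gamma_t(s))<P_{p;q}(x_0)$ for $t$ near $1$, so $\gamma_t(s)$ avoids $\operatorname{Cut}(q)$ for such $t$ (using that $x_0$ minimizes $P_{p;q}$ on $\operatorname{Cut}(q)$ locally and $P_{p;q}$ is continuous). Lift the curves based at $q$: since $d\exp_q$ is a local diffeomorphism near $\alpha_1'(0)$, choose neighborhoods $W\ni \alpha_1'(0)$ in $T_qM$ and $U\ni x_0$ in $M$ with $\exp_q|_W:W\to U$ a diffeomorphism; for $t$ near $1$ the curve $\gamma_t$ lies in $U$ and, being free of cut points of $q$, lifts to a curve $\tilde\gamma_t$ in $W$ whose endpoint must be the $W$-lift of $\beta(t)$. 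As $t\to 1^-$ this endpoint converges to a lift of $\beta(1)=x_0$ in $W$, forcing $\beta'(0)$ (suitably scaled) to equal $\alpha_1'(0)$, hence $\beta=\alpha_1$ up to reparametrization — but then $\beta$ \emph{is} a smooth extension-partner of $\alpha_1$, contradicting the assumption. Therefore every minimal geodesic from $p$ to $x_0$ fits smoothly onto \emph{some} minimal geodesic from $q$ to $x_0$.

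Finally I would do the bookkeeping that converts "each $\beta$ extends to some $\alpha$" into the trichotomy (\ref{lem-klingenberg-generalized}.1)--(\ref{lem-klingenberg-generalized}.3). Since the smooth-extension relation matches a $\beta$ to the $\alpha$ with $\alpha'(1)=-\beta'(1)/|\beta'(1)|\cdot|\alpha'(1)|$, and a minimal geodesic is determined by its endpoint velocity, this relation is injective from the set of minimal $px_0$-geodesics into the set of minimal $qx_0$-geodesics; running the symmetric argument with the roles of $p$ and $q$ interchanged (legitimate because the non-degeneracy at $\alpha_1'(0)$ also gives, after the extension is established, control on the $q$-side) shows the matching is a bijection between the two sets when both have $\ge 2$ elements, and bounds each by $2$: if there were a third minimal $px_0$-geodesic it would have to coincide in $1$-jet at $x_0$ with the extension of one of $\alpha_1,\alpha_2$ and hence be a repeat. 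The three alternatives are then precisely: $\beta_1$ extends $\alpha_1$ and there is no $\alpha_2$ (case 1); $\beta_1$ extends $\alpha_2$ while $\alpha_1$ is "unmatched" (case 2); and both $\beta_i$ present, each matched to a distinct $\alpha_i$ (case 3). The main obstacle I anticipate is not the variational inequality but making the counting airtight — in particular justifying the "at most two" bound on each side and ruling out a minimal geodesic on one side that fails to be matched on the other without the non-degeneracy being transferable; this is where care with the Morse-index / local-diffeomorphism input (and possibly appeal to the uniqueness statements already proved in the one-point case) is needed.
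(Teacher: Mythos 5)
Your first step (the perimeter inequality with equality forcing the two velocities at $x_0$ to be opposite) is exactly the paper's computation $(*)$/$(**)$ and is fine. The gap is in the lifting step. First, a bookkeeping error that is symptomatic of a real confusion: the curve $\gamma_t$ you lift runs from $\alpha_1(t)$ to $\beta(t)$ and is lifted through $\exp_q$, so its endpoint lift is the initial velocity \emph{at $q$} of the minimal geodesic from $q$ to $\beta(t)$ — it has nothing to do with $\beta'(0)\in T_pM$, and the conclusion ``$\beta'(0)$ equals $\alpha_1'(0)$, hence $\beta=\alpha_1$'' compares vectors in different tangent spaces and identifies geodesics with different base points. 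Second, and more fundamentally, a \emph{single} connecting curve cannot produce a contradiction: its lift legitimately runs from the $W$-lift of $\beta(t)$ to $t\alpha_1'(0)$, both of which may perfectly well lie in $W$. Worse, the statement your argument would establish — that every minimal $p x_0$-geodesic is a smooth extension of $\alpha_1$ \emph{itself} — is false; alternatives (\ref{lem-klingenberg-generalized}.2) and (\ref{lem-klingenberg-generalized}.3) are precisely the cases where a $\beta$ extends a \emph{different} minimal geodesic $\alpha_2\neq\alpha_1$. The paper's contradiction requires \emph{two} connecting curves $\gamma_{i,t}$ from $\beta_1(t)$ to $\alpha_i(t)$, $i=1,2$, under the assumption that $\beta_1$ extends \emph{neither} $\alpha_i$: both curves then avoid $\operatorname{Cut}(q)$, both lifts start at the same lift of $\beta_1(t)$ and hence stay in $W$, but their endpoints $t\alpha_1'(0)$ and $t\alpha_2'(0)$ cannot both lie in a small neighborhood $W$ of $\alpha_1'(0)$. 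The correct intermediate claim is therefore: if $\beta_1$ does not extend $\alpha_1$, then every other minimal geodesic from $q$ to $x_0$ coincides with the extension of $\beta_1$, so there are exactly two of them.

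Your final counting paragraph also leans on a move that is not available: interchanging the roles of $p$ and $q$. All the hypotheses (that $x_0$ is a cut point of $q$, that it locally minimizes $P_{p;q}$ on $\operatorname{Cut}(q)$, and the non-degeneracy of $d\exp_q$) live on the $q$-side; $x_0$ need not be a cut point of $p$ at all, so there is no symmetric claim to invoke. The paper instead applies the two-curve claim to each candidate $\beta_j$ separately: if $\beta_2\neq\beta_1$ then at least one of them fails to extend $\alpha_1$, the claim pins it to the unique second geodesic $\alpha_2$, and uniqueness of a geodesic with prescribed $1$-jet at $x_0$ then forces the matching to be injective and caps both counts at two. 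With the two-curve version of the claim in hand, that bookkeeping does go through essentially as you describe.
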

If $p=x_0$ in Lemma \ref{lem-klingenberg-generalized}, then the conclusion holds trivially after viewing $p$ as a minimal geodesic from $p$ to itself. If $p=q$, then Lemma \ref{lem-klingenberg-generalized} coincides with Lemma \ref{lem-klingenberg-improved}.
	\begin{proof} In the following we assume that $p\neq x_0$.
		Since $\exp_q$ is non-singular at $\alpha_1'(0)$, there are at least two minimal geodesics connecting $q$ and $x_0$. We claim that
		
		\begin{claim*}
			For any minimal geodesic $\beta_1:[0,1]\to M$ from $\beta_1(0)=p$ to $\beta_1(1)=x_0$, if $\beta_1$ does not coincide with the extension of $\alpha_1$ at $x_0$, i.e.,
			$$\alpha_1'(1)\neq -\frac{d(q,x_0)}{d(p,x_0)}\beta_1'(1).$$ Then for any other minimal geodesic $\alpha_2:[0,1]\to M$ from $q$ to $x_0$ coincides with the extension of $\beta_1$ at $x_0$, i.e., $$\alpha_2'(1)=-\frac{d(q,x_0)}{d(p,x_0)}\cdot \beta_1'(1).$$
			In particular, there are exactly two minimal geodesics $\alpha_1$ and $\alpha_2$ between $q$ and $x_0$.
		\end{claim*}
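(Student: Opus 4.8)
The plan is to imitate the proof of Lemma~\ref{lem-klingenberg-improved}, with the function $F=2d(\cdot,p)$ there replaced by the perimeter $P_{p;q}$, and with $\beta_1(t)$ serving as the vertex of a broken geodesic joining two minimal geodesics from $q$ to $x_0$. First note that the Claim is not vacuous: since $x_0\in\operatorname{Cut}(q)$ and $q,x_0$ are not conjugate along $\alpha_1$, there is at least one other minimal geodesic from $q$ to $x_0$ (a cut point that is not conjugate along some minimal geodesic must be joined to $q$ by at least two of them). Arguing by contradiction, suppose there is a minimal geodesic $\alpha_2\neq\alpha_1$ from $q$ to $x_0$ which, like $\alpha_1$, does not coincide with the extension of $\beta_1$ at $x_0$, i.e. $\alpha_i'(1)\neq-\frac{d(q,x_0)}{d(p,x_0)}\beta_1'(1)$ for $i=1,2$.

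For $t$ close to $1$ let $\sigma^{(i)}_t$ be the (unique, for $t$ near $1$) minimal geodesic from $\beta_1(t)$ to $\alpha_i(t)$; as $t\to 1$ it shrinks into an arbitrarily small neighborhood of $x_0$. Using $d(\beta_1(t),\sigma^{(i)}_t(s))+d(\alpha_i(t),\sigma^{(i)}_t(s))=d(\beta_1(t),\alpha_i(t))$ and the triangle inequality, for every $s$
$$P_{p;q}(\sigma^{(i)}_t(s))\le d(p,\beta_1(t))+d(q,\alpha_i(t))+d(\beta_1(t),\alpha_i(t))+d(p,q),$$
where $d(p,\beta_1(t))=t\,d(p,x_0)$ and $d(q,\alpha_i(t))=t\,d(q,x_0)$ since $\beta_1,\alpha_i$ are minimal. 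Since $\beta_1$ does not extend $\alpha_i$, for $t$ near $1$ the point $x_0$ lies on no minimal geodesic from $\beta_1(t)$ to $\alpha_i(t)$: once $d(\beta_1(t),x_0)$ and $d(\alpha_i(t),x_0)$ are smaller than $\operatorname{inj}(x_0)$, the only candidate is the concatenation of $\beta_1|_{[t,1]}$ with the reverse of $\alpha_i|_{[t,1]}$, which is broken at $x_0$ because $\beta_1'(1)$ and $\alpha_i'(1)$ are not antiparallel. Hence $d(\beta_1(t),\alpha_i(t))<(1-t)\big(d(p,x_0)+d(q,x_0)\big)$, and the estimate above gives $P_{p;q}(\sigma^{(i)}_t(s))<P_{p;q}(x_0)$ for all $s$. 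As $x_0$ is a local minimum of $P_{p;q}|_{\operatorname{Cut}(q)}$ and $\sigma^{(i)}_t$ lies near $x_0$, it follows that $\sigma^{(i)}_t$ avoids $\operatorname{Cut}(q)$ entirely.

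Consequently the concatenation $c_t$ of the reverse of $\sigma^{(1)}_t$ with $\sigma^{(2)}_t$ is a curve from $\alpha_1(t)$ through $\beta_1(t)$ to $\alpha_2(t)$ that misses $\operatorname{Cut}(q)$ and stays near $x_0$. Pick a neighborhood $W$ of $\alpha_1'(0)$ in $T_qM$ on which $\exp_q$ is a diffeomorphism onto a neighborhood of $x_0$, small enough that $\alpha_2'(0)\notin\overline{W}$ (possible since $\alpha_1'(0)\neq\alpha_2'(0)$ and $\exp_q$ is non-degenerate at $\alpha_1'(0)$). Because $\alpha_1,\alpha_2$ are minimal up to their common cut point $x_0$, one has $\exp_q^{-1}(\alpha_i(t))=t\alpha_i'(0)$ for the inverse of $\exp_q$ on $M\setminus\operatorname{Cut}(q)$, so lifting $c_t$ through this inverse produces a curve $\tilde c_t$ from $t\alpha_1'(0)$ to $t\alpha_2'(0)$. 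Exactly as in the proof of Lemma~\ref{lem-klingenberg-improved}, an open-and-closed argument shows $\tilde c_t$ must coincide with $(\exp_q|_W)^{-1}\circ c_t$ and hence stay in $W$; but then $t\alpha_2'(0)\in W$ for $t$ near $1$, contradicting $\alpha_2'(0)\notin\overline{W}$. This proves that every minimal geodesic from $q$ to $x_0$ other than $\alpha_1$ coincides with the extension of $\beta_1$ at $x_0$; since any two such geodesics share the endpoint $x_0$ and the velocity $-\frac{d(q,x_0)}{d(p,x_0)}\beta_1'(1)$ there, they must be equal, so there are exactly two minimal geodesics $\alpha_1,\alpha_2$ between $q$ and $x_0$.

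The step I expect to need the most care is the final lifting: one has to be sure the lift $\tilde c_t$ stays inside the coordinate patch $W$ and does not escape through the tangent cut locus, which is precisely why it matters to have arranged beforehand that $c_t$ lies in $\exp_q(W)$ and avoids $\operatorname{Cut}(q)$. The other substantive point is the estimate of the second paragraph, where the non-extension hypothesis on $\beta_1$ is exactly what upgrades $P_{p;q}(\sigma^{(i)}_t(s))\le P_{p;q}(x_0)$ to a strict inequality.
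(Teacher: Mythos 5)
Your proposal is correct and follows essentially the same route as the paper: compare $P_{p;q}$ along the connecting minimal geodesics $\sigma^{(i)}_t$, use the non-antiparallel hypothesis to make the perimeter estimate strict, conclude from the local minimality of $P_{p;q}|_{\operatorname{Cut}(q)}$ that these curves avoid $\operatorname{Cut}(q)$, and then lift through $\exp_q$ near $\alpha_1'(0)$ to force $t\alpha_2'(0)\in W$, contradicting $\alpha_2'(0)\notin\overline{W}$. The only difference is cosmetic (you concatenate the two segments into one curve before lifting, while the paper lifts each $\gamma_{i,t}$ separately), so no further comment is needed.
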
  
		
		In order to prove the claim, let us argue by contradiction. Let $\alpha_2:[0,1]\to M$ be another minimal geodesic from $q$ to $x_0$ such that both $\alpha_i'(1)\neq - \frac{d(q,x_0)}{d(p,x_0)}\cdot \beta_1'(1)$ for $i=1,2$. Let $\gamma_{i,t}:[0,1]\to M$ be a minimal geodesic from $\beta_1(t)$ to $\alpha_i(t)$. 
		Let us choose an open neighborhood $U$ of $x_0$ such that $P_{p;q}(x_0)$ is minimizing in $P_{p;q}(\operatorname{Cut}(p)\cap U)$ and there is an open neighborhood $W$ of $\alpha_1'(1)$ in $T_qM$, restricted on which $\left.\exp_q\right|_W:W\to U$ is a diffeomorphism. For any $0<t<1$ sufficient close to $1$,  $\gamma_{i,t}$ lies in $U$, hence has a lift $\tilde\gamma_{i,t}$ in $W$. 
		
		Since $\alpha_i'(1)\neq-\frac{d(q,x_0)}{d(p,x_0)}\cdot \beta_1'(1)$, the same argument as in the proof of Lemma \ref{lem-klingenberg-improved} shows that
		\begin{align*}
		P_{p;q}(\gamma_{i,t}(s))< P_{p;q}(x_0). \tag{$**$}
		\end{align*}
		 The fact that $x_0$ is a minimum point of $\left.P_{p;q}\right|_{\operatorname{Cut}(q)\cap U}$ and $(**)$ implies that none of $\gamma_{i,t}$ has intersection with $\operatorname{Cut}(q)$. It follows that the endpoint of $\tilde\gamma_{i;t}$ satisfies
		 $$t\cdot \alpha_i'(0)=\tilde\gamma_{i,t}(1)\in W, \quad i=1,2.$$
		 At the same time, $\alpha_2'(0)\not\in W$ implies that,  $$t\cdot \alpha_2'(0)\not\in W, \quad \text{when $t$ close to $1$.}$$ So we meet a contradiction.
		 
		 By the claim, if there is a a unique minimal geodesic $\beta_1$ from $p$ to $x_0$, then either its extension at $x_0$ coincides with $\alpha_1$, or there are exactly two minimal geodesics $\alpha_{i}$ $(i=1,2)$ from $q$ to $x_0$ such that $\alpha_2$ coincides with the extension of $\beta_1$ at $x_0$.
		 
		 Now let us assume that there is another minimal geodesic $\beta_2:[0,1]\to M$ from $p$ to $x_0$ other than $\beta_1$. Then it is clear that at lease one of $\beta_j$, say $\beta_2$, does not coincide with the extension of $\alpha_1$ at $x_0$. By applying the claim to $\beta_2$, we see that there are exactly two minimal geodesics $\alpha_1,\alpha_2:[0,1]\to M$ from $q$ to $x_0$ such that $\beta_2'(1)=-\frac{d(p,x_0)}{d(q,x_0)}\cdot \alpha_2'(1).$
		 In this case $\alpha_1$ must coincides with the extension of $\beta_1$ at $x_0$. 
		 
		 Summarizing the above, we have proved that one and only one of the three cases (\ref{lem-klingenberg-generalized}.1)-(\ref{lem-klingenberg-generalized}.3) can happen.
	\end{proof}
	Now we are ready to prove Theorem \ref{thm-conv-lowerbound}, Theorem \ref{thm-inj-lipdecay} and Theorem \ref{thm-geodesic-length}.
	\begin{proof}[Proof of Theorem \ref{thm-conv-lowerbound}]
	~\newline \indent
			Let  $0< r < \min\{ \operatorname{foc}(B_{\operatorname{inj}(p)}(p)), \frac{1}{2}\operatorname{inj}(p)\}$. 
			By Proposition \ref{prop-conv-focal-cutdecay}, it suffices to show that $R_c(p)>r$, that is, $B_r(p)$ contains no cut point of $q$ for any $q\in B_r(p)$.
			Let us argue by contradiction.
			If there is a cut point of $q$ in $B_r(p)$, say $x$, then the value of perimeter function $\left. P_{p;q}\right|_{\operatorname{Cut}(q)}:\operatorname{Cut}(q)\to \mathbb R$ at $x$ satisfies 
			$$P_{p;q}(x)=d(p,x)+d(x,q)+d(p,q)< 4r.$$
			Let $x_0$ be a minimum point of $\left. P_{p;q}\right|_{\operatorname{Cut}(q)}$, then 
			$$P_{p;q}(x_0)\le P_{p;q}(x)< 4r.$$
			Because for any two points $y,z$ in the triangle $\triangle(pqx_0)$, one always has
			$$d(y,z)\le \frac{1}{2}P_{p;q}(x_0),$$ it follows that
			$$d(x_0,q)<2r, \text{ and } d(x_0,p)<2r.$$
			If $q$ and $x_0$ are not conjugate to each other along a minimal geodesic between them, then by Lemma \ref{lem-klingenberg-generalized} there are two minimal geodesics from $p$ to $x_0$ and from $x_0$ to $q$ which form a whole geodesic $\gamma$.
			Now it is clear that for any $z$ on $\gamma$,
			$$d(p,z)\le \frac{1}{2} P_{p;q}(x_0)<2r,$$
			which implies that $\gamma$ lies in $B(p,2r)$.
			Therefore there are two geodesics connecting $p$ and $q$ inside $B_{2r}(p)$ with $2r<\operatorname{inj}(p)$, a contradiction. 
			
			To complete the proof, let us now verify that $q,x_0$ are not conjugate to each other along any minimal geodesic. This is because if they are conjugate along a minimal geodesic, then
			$$\operatorname{foc}(q)+\operatorname{foc}(x_0)\le d(q,x_0)<2r,$$
			which implies
			$$\min\{\operatorname{foc}(q),\operatorname{foc}(x_0)\})< r.$$
			Since $q,x_0\in B_{2r}(p)\subset B_{\operatorname{inj}(p)}(p)$,
			$$r<\operatorname{foc}(B_{\operatorname{inj}(p)}(p))\le \min\{\operatorname{foc}(q),\operatorname{foc}(x_0)\}<r,$$ 
			a contradiction derived. 
	\end{proof}
	
	\begin{proof}[Proof of Theorem 1.2]
		~\newline\indent
		For any $q\in B_{\operatorname{inj}(p)}(p)$, we want to show that $$\operatorname{inj}(q)\ge \min\{\operatorname{inj}(p),\operatorname{conj}(q) \}-d(p,q).$$
		Let us argue by contradiction.
		For $0<r<\min\{\operatorname{inj}(p),\operatorname{conj}(q) \}-d(p,q)$, assume that $B_r(q)$ contains a cut point $x$ of $q$, then the minimum of perimeter function $P_{p;q}$ on $\operatorname{Cut}(q)$ satisfies
		$$\min P_{p;q}|_{\operatorname{Cut}(q)}\le d(p,x)+d(q,x)+d(p,q)\le 2r+2d(p,q).$$
		Let $x_0$ be a minimal point of $\left.P_{p;q}\right|_{\operatorname{Cut}(q)}$, then 
		$$d(x_0,q)\le \frac{1}{2}P_{p;q}(x_0)\le r+d(p,q)<\operatorname{conj}(q).$$
		By Lemma \ref{lem-klingenberg-generalized}, 
		there are minimal geodesics $\beta:[0,d(p,x_0)]\to M$ from $p$ to $x_0$ and $\alpha:[0,d(q,x_0)]\to M$ from $x_0$ to $q$ such that they form a whole geodesic $\sigma=\alpha*\beta$ passing $x_0$.
		Because for any point $y$ on $\sigma$, 
		$$d(y,p)\le \frac{1}{2}P_{p;q}(x_0)\le r+d(p,q)<\operatorname{inj}(p),$$ 
		in the open ball $B_{\operatorname{inj}(p)}(p)$ there is another geodesic $\sigma$ connecting $p$ and $q$ other than the minimal geodesic between them, a contradiction.
	\end{proof}

	\begin{proof}[Proof of Theorem \ref{thm-geodesic-length}]
		$ $ \newline \indent
		Let $0<r\le\min\{\operatorname{inj}(p),\frac{1}{2}\operatorname{conj}(B_{\operatorname{inj}(p)}(p))\}$ and $\gamma:[0,1]\to B_r(p)$ is a geodesic. Because the image of $\gamma$ is compact, there is $0<r_1<r$ such that $\gamma$ lies in $B_{r_1}(p)$.
		Let us consider $T_pM$ with pull-back metric $\exp_p^*g$ by $\exp_p$ from $M$. Then $\exp_p^*g$ is non-degenerate on $B_{\operatorname{conj}(p)}(o)$. 
		Let $g_0=g|_p$ be the Euclidean metric on $T_pM$, then we define a complete metric $\tilde g$ by gluing  $\exp_p^*g$ and $g_0$ together such that $\left.\tilde g\right|_{B_{2r_1}(o)}=\exp_p^*g$, $\left.\tilde g\right|_{T_pM\setminus B_{\operatorname{conj}(p)}(o)}=g_0$. Then under the new metric $\tilde g$, $\operatorname{inj}(o)\ge 2r_1$.
		Let $\tilde\gamma:[0,1]\to B_r(o)$ be the lift of $\gamma$ in $B_{r_1}(o)\subset (T_pM,\tilde g)$. We claim that $\tilde\gamma$ is a minimal geodesic in $B_{\operatorname{conj}(p)}(o)$. 
		
		Indeed, if $\tilde\gamma(t)$ is a cut point of $v=\tilde\gamma(0)$, then by the proof of Theorem \ref{thm-conv-lowerbound}, the minimum point $w$ of $\left.P_{o;v}(x)\right|_{\operatorname{Cut}(v)}$ and the minimal geodesic connecting $v$ and $w$ lie in $B_{2r_1}(o)$. Because $2r_1<\operatorname{conj}(\exp_pv)$, $w$ is not a conjugate point of $v$. By the generalized Klingenberg's lemma \ref{lem-klingenberg-generalized}, there is another geodesic from $o$ to $v$ passing through $w$, which is a contradiction.
		
		Since $\tilde \gamma$ is a minimal geodesic, the length of $\tilde{\gamma}$ $< 2r_1$. Hence so is $\gamma$.
	\end{proof}

	Motivated by Lemma \ref{lem-klingenberg-improved} and \ref{lem-klingenberg-generalized}, we call a cut point $x$ of $q\in M$ is \emph{totally conjugate} \cite{Xu2016On} if $q$ and $x$ are conjugate to each other along every minimal geodesic between them, then we call $x$ a \emph{totally conjugate cut} point of $q$. 
	Now it is clear that
	Klingenberg's equality (\ref{eq-inj-pt}) on injectivity radius can be rewritten as
	\begin{equation}
	\operatorname{inj}(p)=\min\{\frac{1}{2}l(p), \operatorname{conj_{t}}(p)\},
	\end{equation}
	where the \emph{totally conjugate cut} radius, $\operatorname{conj_t}(p),$ is defined by
	$$\operatorname{conj_t}(q)=\sup\{r>0 : \text{$B_r(q)$ contains no totally conjugate cut point of $q$}\}.$$
	By the proofs of Theorem 1.2, it is also clear that (\ref{ineq-aroundinj}) can be improved to be
	\begin{equation}
	\operatorname{inj}(q)\ge \min\{\operatorname{inj}(p),\operatorname{conj_t}(q)\}-d(p,q).
	\end{equation}
	
	For the concentric convex radius of $p$, we have the following estimate in terms of the extended focal radius of $p$ and conjugate radius around $p$. 
	
	\begin{theorem}\label{thm-convpt-exfoc}
		Let $U=B_{\min\{\operatorname{foc_e}(p),\frac{1}{2}\operatorname{inj}(p)\}}(p)$.
		\begin{equation}\label{ineq-convpt-around}
		\begin{aligned}
		\operatorname{conv_{ct}}(p)
		&\ge\min\{\operatorname{foc_e}(p), \operatorname{\frac{1}{2}\operatorname{inj}(p)}, \frac{1}{2}\operatorname{conj_t}(U)\}\\
		&= \min\{\operatorname{foc_e}(p), \frac{1}{4}l(p), \frac{1}{2}\operatorname{conj_t}(U)\}
		\end{aligned}
		\end{equation}
	\end{theorem}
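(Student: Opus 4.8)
The plan is to mirror the proofs of Theorem~\ref{thm-conv-lowerbound} and Theorem~\ref{thm-inj-lipdecay}, the two new features being an initial reduction through Proposition~\ref{prop-conv-focal-cutdecay} and the systematic use of the totally conjugate cut radius in place of the conjugate radius. Set $\rho:=\min\{\operatorname{foc_e}(p),\frac{1}{2}\operatorname{inj}(p),\frac{1}{2}\operatorname{conj_t}(U)\}$. First I would observe that the equality of the two lines in (\ref{ineq-convpt-around}) is immediate from Klingenberg's equality $\operatorname{inj}(p)=\min\{\frac{1}{2}l(p),\operatorname{conj_t}(p)\}$: it gives $\frac{1}{2}\operatorname{inj}(p)=\min\{\frac{1}{4}l(p),\frac{1}{2}\operatorname{conj_t}(p)\}$, and since $U$ has positive radius we have $p\in U$, hence $\operatorname{conj_t}(U)\le\operatorname{conj_t}(p)$ and the term $\frac{1}{2}\operatorname{conj_t}(p)$ is swallowed by $\frac{1}{2}\operatorname{conj_t}(U)$. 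Next, by (\ref{prop-conv-focal-cutdecay}.1) one has $\operatorname{conv_{ct}}(p)=\min\{\operatorname{foc_e}(p),R_c(p)\}$ with $\operatorname{foc_e}(p)\ge\rho$, so it suffices to show $R_c(p)\ge\rho$, i.e.\ that for every $0<r<\rho$ and every $q\in B_r(p)$ the ball $B_r(p)$ contains no cut point of $q$.

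The heart of the argument is the same contradiction as in Theorem~\ref{thm-conv-lowerbound}. Assuming $x\in B_r(p)\cap\operatorname{Cut}(q)$, one has $P_{p;q}(x)<4r$, so a minimizer $x_0$ of $\left.P_{p;q}\right|_{\operatorname{Cut}(q)}$ satisfies $P_{p;q}(x_0)<4r$ and therefore $d(x_0,p)<2r$, $d(x_0,q)<2r$. Since $r<\rho\le\min\{\operatorname{foc_e}(p),\frac{1}{2}\operatorname{inj}(p)\}$, the whole ball $B_r(p)$---in particular $q$---lies in $U$, so $\operatorname{conj_t}(q)\ge\operatorname{conj_t}(U)\ge 2\rho>2r>d(x_0,q)$; hence $x_0$ is not a totally conjugate cut point of $q$, i.e.\ there is a minimal geodesic from $q$ to $x_0$ along which $q$ and $x_0$ are not conjugate. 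If $p\neq x_0$, the generalized Klingenberg's Lemma~\ref{lem-klingenberg-generalized} applied at the local minimum $x_0$ of $\left.P_{p;q}\right|_{\operatorname{Cut}(q)}$ yields a geodesic $\gamma$ from $p$ to $q$ passing through $x_0$, formed by joining a minimal geodesic $p\to x_0$ to a minimal geodesic $x_0\to q$; since $d(p,z)\le\frac{1}{2}P_{p;q}(x_0)<2r$ for every $z$ on $\gamma$, it lies in $B_{2r}(p)\subset B_{\operatorname{inj}(p)}(p)$. As in the proof of Theorem~\ref{thm-conv-lowerbound}, such a $\gamma$ must be the unique minimal geodesic $c$ from $p$ to $q$; but then the sub-arc of $\gamma=c$ from $q$ to $x_0$ is a minimal geodesic that extends along $c$ beyond the cut point $x_0$, which is absurd.

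It remains to rule out the degenerate case $p=x_0$, i.e.\ $p\in\operatorname{Cut}(q)$, which I would handle directly: then $d(p,q)=d(x_0,q)\ge\operatorname{inj}(q)$, while the totally conjugate refinement of Theorem~\ref{thm-inj-lipdecay} gives $\operatorname{inj}(q)\ge\min\{\operatorname{inj}(p),\operatorname{conj_t}(q)\}-d(p,q)>2r-r=r>\operatorname{inj}(q)$ (using $\operatorname{inj}(p)>2r$, $\operatorname{conj_t}(q)\ge\operatorname{conj_t}(U)\ge 2\rho>2r$ and $d(p,q)<r$), a contradiction; so this case does not occur, and $R_c(p)\ge\rho$ follows. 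The step I expect to be the main obstacle---and the one treated tersely in the proofs of Theorems~\ref{thm-conv-lowerbound} and~\ref{thm-inj-lipdecay}---is the assertion that a geodesic issuing from $p$ and staying in the open ball $B_{\operatorname{inj}(p)}(p)$ must be the minimal geodesic to its endpoint, since the $\gamma$ produced above only carries the a priori length bound $4r$, which may exceed $\operatorname{inj}(p)$. I would settle it by lifting $\gamma$ through $\exp_p$, which restricts to a diffeomorphism $B_{\operatorname{inj}(p)}(0)\to B_{\operatorname{inj}(p)}(p)$ since $\operatorname{inj}(p)\le\operatorname{conj}(p)$, and under which geodesics of $g$ from $p$ correspond to geodesics of $\exp_p^*g$ from the origin; the latter are the straight segments, so the lift of $\gamma$ is the straight segment ending at $\exp_p^{-1}(q)$, forcing $\gamma$ to be minimal of length $d(p,q)$.
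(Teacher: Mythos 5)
Your proof is correct and is essentially the argument the paper intends: the paper's own proof of Theorem \ref{thm-convpt-exfoc} is simply a reference to the proof of Theorem \ref{thm-conv-lowerbound}, and you carry out exactly that adaptation --- reducing to a lower bound on $R_c(p)$ via (\ref{prop-conv-focal-cutdecay}.1), minimizing the perimeter function on $\operatorname{Cut}(q)$, and replacing the focal-radius argument for non-conjugacy of $q$ and $x_0$ by the bound $d(x_0,q)<2r<\operatorname{conj_t}(q)$. The additional details you supply --- the lifting argument showing that a geodesic issuing from $p$ and contained in $B_{\operatorname{inj}(p)}(p)$ must be minimal (the same device used in the proof of Theorem \ref{thm-geodesic-length}) and the degenerate case $p=x_0$ --- are points the paper leaves implicit, and both are handled correctly.
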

	\begin{proof}
		It follows from similar arguments as the proof of Theorem \ref{thm-conv-lowerbound}.
	\end{proof}
	We point it out that in (\ref{ineq-convpt-around}) the conjugate radius of points of $p$ is necessary in general. The vertex $p$ of a paraboloid of revolution satisfies that $\operatorname{conv_{ct}}(p)
	<\min\{\operatorname{foc_e}(p), \operatorname{\frac{1}{2}\operatorname{inj}(p)}\}=\infty$.

	\section{Discontinuity of Concentrically Convex Neighborhoods}
	In this section we give examples to illustrate that the concentric convexity radius $\operatorname{conv_{ct}}(p)$ may be not continuous and does not equal $\operatorname{conv}(p)$, as well as other examples mentioned earlier. 
	
	\begin{theorem}\label{thm-counterexample-euclidean}
		There are smooth rotationally symmetric metrics on $\mathbb{R}^n$ $(n\ge2)$, $h=dr^2+\varphi^2(r)dS_{n-1}^2$, without conjugate points such that the sectional curvature of $h$ changes signs and there is a sequence $z_i$ of points converging to $z$ with $\operatorname{conv_{ct}}(z_i)<2$ and $\operatorname{conv_{ct}}(z)=\infty$. 
	\end{theorem}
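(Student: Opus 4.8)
The plan is to reduce the assertion about $\operatorname{conv_{ct}}$ to one about the extended focal radius $\operatorname{foc_e}$, to obtain the metric from (a variant of) Gulliver's construction in \cite{Gulliver1975On}, and then to read off the discontinuity from the Jacobi fields of a rotationally symmetric metric together with Lemma \ref{lem-converg-foc-radi}. Suppose first that $h=dr^2+\varphi^2(r)dS_{n-1}^2$ is a complete rotationally symmetric metric on $\mathbb R^n$ with no conjugate points (the choice of $\varphi$ is the heart of the matter and is addressed below). Since $(\mathbb R^n,h)$ is complete, simply connected and has no conjugate points, the Cartan--Hadamard theorem shows that $\exp_p:T_p\mathbb R^n\to\mathbb R^n$ is a diffeomorphism for every $p$; hence $\operatorname{Cut}(p)=\emptyset$, so $R_c(p)=\infty$, and $(\ref{prop-conv-focal-cutdecay}.1)$ gives $\operatorname{conv_{ct}}(p)=\min\{\operatorname{foc_e}(p),R_c(p)\}=\operatorname{foc_e}(p)$ for every $p$. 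Thus it suffices to exhibit $\varphi$ as above, with the sectional curvature of $h$ of both signs, together with a point $z$ with $\operatorname{foc_e}(z)=\infty$ and a sequence $z_i\to z$ with $\operatorname{foc_e}(z_i)<2$.

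For the warping function, following Gulliver I would take a smooth $\varphi$ whose odd extension is smooth, with $\varphi(0)=0$, $\varphi'(0)=1$, $\varphi>0$ on $(0,\infty)$, nondecreasing, and --- in the model case $n=2$ --- with $\varphi'$ vanishing to second order at a single radius $r_0\in(0,2)$ and positive elsewhere. Then $\varphi'(r_0)=\varphi''(r_0)=0$, so the Gauss curvature $K=-\varphi''/\varphi$ is positive on $(r_0-\delta,r_0)$ and negative on $(r_0,r_0+\delta)$, and the curvature changes sign. The delicate point is that, by concentrating the positive part of the curvature in a narrow band below $r_0$ where $\varphi'$ is tiny, one can keep it so weak that the Jacobi equation $f''+K(r(t))f=0$ remains disconjugate along \emph{every} geodesic of $h$: the Clairaut relation bounds the depth of a geodesic's radial oscillation and also quantifies how slowly a geodesic moves near its turning point, so that a Sturm-type comparison controls $\int K^{+}$ along every geodesic arc by a constant that the construction can be made as small as desired. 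This --- that $h$ has no conjugate points in spite of the change of sign of the curvature --- is the main obstacle, and it is exactly what Gulliver's construction provides. For $n\ge 3$ one must instead take $\varphi'>0$ everywhere, since a totally geodesic round $(n-1)$-sphere $\{r=r_0\}$ would carry conjugate points; the curvature is then made to change sign by, say, letting $\varphi'$ cross $1$, and the degeneracy below is installed at a non-central point, so the bookkeeping is longer but the mechanism is the same.

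Granting such a $\varphi$, I would conclude as follows (taking $n=2$ and $z$ the center $o$). Every geodesic issuing from $o$ is a meridian $\gamma(t)=\exp_o(tv)$, and a normal Jacobi field $J$ along it with $J(0)=0$ equals $\varphi(t)E(t)$ for a parallel normal field $E$; hence $|J|(t)=\varphi(t)$ and $|J|'(t)=\varphi'(t)\ge 0$ for all $t$. Consequently no Jacobi field from $o$ ever satisfies $|J|'<0$, i.e.\ $\operatorname{foc_e}(o)=\infty$, so by the first paragraph $\operatorname{conv_{ct}}(o)=\infty>10$. On the other hand $|J|'(r_0)=\varphi'(r_0)=0$, so the ordinary focal radius satisfies $\operatorname{foc}(o)=r_0<2$. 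Lemma \ref{lem-converg-foc-radi} applied at $p=o$ then yields a sequence $z_i\to o$ with $\operatorname{foc_e}(z_i)\to\operatorname{foc}(o)=r_0<2$, hence $\operatorname{conv_{ct}}(z_i)=\operatorname{foc_e}(z_i)<2$ for all large $i$. Setting $z:=o$ and discarding finitely many terms completes the proof; the case $n\ge 3$ is entirely analogous once $z$ and $\varphi$ are chosen as indicated above.
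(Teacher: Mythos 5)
Your overall strategy is the paper's: reduce $\operatorname{conv_{ct}}$ to $\operatorname{foc_e}$ via (\ref{prop-conv-focal-cutdecay}.1) together with Cartan--Hadamard (no conjugate points on $\mathbb{R}^n$ gives $\operatorname{Cut}(x)=\emptyset$, hence $R_c\equiv\infty$), and then produce the sequence $z_i$ from Lemma \ref{lem-converg-foc-radi}. The gap is in the metric itself: the warping function you propose cannot exist. If $\varphi'(r_0)=0$ for some $r_0>0$, the circle $\{r=r_0\}$ has geodesic curvature $\varphi'(r_0)/\varphi(r_0)=0$, i.e.\ it is a closed geodesic (for $n\ge 3$ a totally geodesic round sphere, which you correctly exclude --- but the same obstruction kills the $n=2$ case on which your argument rests). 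A complete, simply connected manifold without conjugate points has $\exp_p$ a diffeomorphism for every $p$ and therefore carries no closed geodesic: if $\sigma$ is a closed geodesic of length $L$ through $p$ with initial vector $v$, then $\exp_p(Lv)=p=\exp_p(0)$. So ``$\varphi'$ vanishes somewhere'' and ``no conjugate points'' are mutually exclusive on $\mathbb{R}^2$; no tuning of the band where $K>0$ can rescue this, and it is not what Gulliver's construction provides. Verifying disconjugacy along the closed geodesic itself (where $K=0$) does not help, because the conjugate points forced by Cartan--Hadamard occur along other geodesics.

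Consequently your choice $z=o$ with $\operatorname{foc}(o)=r_0<\infty=\operatorname{foc_e}(o)$ is unavailable. In Gulliver's actual metric one has $\varphi=\sin r$ on a cap of radius a bit more than $\pi/4$ and $\varphi=\sinh(r-r_2)$ outside, so $\varphi'>0$ everywhere and $\operatorname{foc}(o)=\infty$; the points of finite focal radius are \emph{off-center} points, namely endpoints of non-radial geodesic chords of length $\pi/2$ lying inside the region of curvature $+1$. The paper then finds the discontinuity point not at the center but as $z=\gamma(t_0)$ on the radial geodesic $\gamma$ from such a point $q$ to $o$, with $t_0=\max\{t:\operatorname{foc}(\gamma(t))<\infty\}$: lower semicontinuity of $\operatorname{foc}$ gives $\operatorname{foc}(z)\le 2$, maximality of $t_0$ gives $\operatorname{foc_e}(z)=\infty$, and Lemma \ref{lem-converg-foc-radi} then concludes exactly as in your last paragraph. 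So the analytic endgame of your argument is sound, but the point $z$ and the mechanism producing $\operatorname{foc}(z)<\operatorname{foc_e}(z)$ must be replaced by this (or an equivalent) off-center construction.
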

	By standard surgery arguments, Theorem \ref{thm-counterexample} follows from Theorem \ref{thm-counterexample-euclidean}. The metric in Theorem 4.1 can be chosen as one of that 
	Gulliver constructed in \cite{Gulliver1975On} whose sectional curvature changes signs with focal points and without conjugate points. Let $\frac{\pi}{4}<r_1<0.8$ and $r_2,\epsilon>0$ such that $\sin r_1=\sinh (r_1-r_2)$, $r_1-\epsilon>\frac{\pi}{4}$, $r_1+\epsilon\le 0.85$. According to \cite{Gulliver1975On}, a warped product metric $h=dr^2+\varphi^2(r)dS_{n-1}^2$ can be constructed such that
	\begin{enumerate}[(\ref{thm-counterexample-euclidean}.1)]
		\item  
		$\varphi(r)|_{[0,r_1-\epsilon]}=\sin(r)$, 
		$\varphi(r)|_{[r_1+\epsilon,+\infty]}=\sinh(r-r_2)$, and the sectional curvature $\operatorname{sec}_h(r)\le G(r)$, where $G$ is monotone non-increasing;
		\item $h$ has no conjugate points;
		\item \label{item-foc-rad-halfpi} there are points in $B_{r_1-\epsilon}(o)$ whose focal radius $=\frac{\pi}{2}$;
		\item \label{item-foc-rad} for any $z\in B_{r_1+\epsilon}(o)$, either  $\operatorname{foc}(z)\le 2$ or $\operatorname{foc}(z)=\infty$. And $\operatorname{foc}(o)=\infty$.
	\end{enumerate}  
	The points of focal radius $=\frac{\pi}{2}$ in (\ref{thm-counterexample-euclidean}.\ref{item-foc-rad-halfpi}) are endpoints of geodesic arc of length $\frac{\pi}{2}$ lying in the spherical cap $B_{r_1-\epsilon}(o)$. Since (\ref{thm-counterexample-euclidean}.\ref{item-foc-rad}) are not explicitly stated in
	\cite{Gulliver1975On}, for reader's convenient we give a sketched proof.
	
	\begin{proof}[Proof of (\ref{thm-counterexample-euclidean}.\ref{item-foc-rad})]
		We first show that for any unit-speed geodesic $\gamma$ starting at $x\in B_{r_1+\epsilon}(o)$ and any normal Jacobi field along $\gamma$ with $J(0)=0$, $|J|'$ is always positive outside $B_{2}(o)$. Because $\gamma$ is minimizing, it follows that $\gamma(t)$ leaves $B_{r_1+\epsilon}(o)$ after time $\le t_1=2(r_1+\epsilon)$ and never comes back. 
		Let $$f=\begin{cases}
		1, & 0\le t\le t_1,\\
		-1, & \text{otherwise}.
		\end{cases}$$
		and $u$ be a distributional solution of $u''+fu=0$ with
		$$u(0)=0 \text{ and } u'(0)=1.$$  
		By Lemma 3 in \cite{Gulliver1975On}, 
		$$\frac{|J|'}{|J|}\ge \frac{u'}{u} \quad\text{ on $[0,t^*]$,\qquad whenever $u>0$ on $(0,t^*]$}.$$ Since $u$ can explicitly solved, it can be directly checked that $u>0$ for all $t\ge 0$ and $u'>0$ whenever $t\ge 2$. Indeed, $u|_{[0,t_1]}=\sin t$, and
		$$u|_{[t_1,+\infty)}=
		\sin t_1 \cosh(t-t_1)+\cos t_1 \sinh(t-t_1).$$
		Because $t_1\le 1.7$, $$u|_{[t_1,+\infty)}>\sin t_1 \left(\cosh t-0.13\sinh t\right)>0,$$ and $u'|_{[2,+\infty)}>0$. Therefore $|J|'>0$ whenever $t\ge 2$.
		
		Similarly it can be showed that $\operatorname{foc}(o)=\infty$.
	\end{proof}
	
	We now prove Theorem \ref{thm-counterexample-euclidean}.
	
	\begin{proof}[Proof of Theorem \ref{thm-counterexample-euclidean}]
	$ $\newline\indent
	Let $h$ be defined as above. 
	Let $q$ be a point in $B_{r_1-\epsilon}(o)$ such that $\operatorname{foc}(q)=\frac{\pi}{2}$. Let $\gamma:[0,1]\to M$ be the minimal $h$-geodesic from $q$ to $o$. 
	Let $t_0=\max\{t\in[0,1]:\operatorname{foc}(\gamma(t))<\infty\}$. 
	By (\ref{thm-counterexample-euclidean}.\ref{item-foc-rad}), if $\operatorname{foc}(\gamma(t))<\infty$, then $\operatorname{foc}(\gamma(t))<2$.
	By the lower semi-continuity of $\operatorname{foc}(x)$, $t_0$ exists  and $\operatorname{foc}(\gamma(t_0))\le 2$. By the choice of $t_0$, it is clear that $\operatorname{foc_e}(\gamma(t_0))=\infty$.
	
	By Lemma \ref{lem-converg-foc-radi}, there is a sequence of point $z_i$ converging to $z=\gamma(t_0)$ such that $\lim_{i\to \infty}\operatorname{foc_e}(z_i)=\operatorname{foc}(z)<2.$ By (\ref{prop-conv-focal-cutdecay}.1), it is clear that $\operatorname{conv_{ct}}(z_i)=\operatorname{foc_e}(z_i)$ and $\operatorname{conv_{ct}}(z)=\infty$.  
	\end{proof}
	\begin{remark}
		By the symmetry of $h$ it is easy to see that $z_i$ can be chosen as points in $\left.\gamma\right|_{(0,t_0)}$ in the above proof of Theorem \ref{thm-counterexample-euclidean}.
	\end{remark}
	
	 It follows from Theorem \ref{thm-counterexample-euclidean} that there is a point $z$ such that $\operatorname{conv_{ct}}(z)>\operatorname{conv}(z)$. So in general the two concepts of convexity radius are different. 
	 
	 It is natural to ask what kind of manifolds satisfy that for any point $p$, the equality in (\ref{ineq-conv-foc}) always holds. By the equality (\ref{eq-conv-global}), if $\operatorname{foc}(B_{\operatorname{inj}(p)}(p))=\operatorname{foc}(M)$ and $\operatorname{inj}(p)=\operatorname{inj}(M)$, then it is clear that (\ref{ineq-conv-foc}) holds as an equality. In particular, we have
	\begin{example}\label{ex-homog}
		Let $M$ be a homogenous space. Then for any $p\in M$,
		$$\operatorname{conv}(p)=\min\{\operatorname{foc}(B_{\operatorname{inj}(p)}(p)), \frac{1}{2}\operatorname{inj}(p)\}.$$
	\end{example}  
	It can be easily seen that (\ref{ineq-conv-foc}) may be strict on a locally symmetric space. For example, $\operatorname{conv}(p)>\frac{1}{2}\operatorname{inj}(p)$ in $\mathbb{R}\times_{e^r}T^2$. In the end of this paper we give examples to show that (\ref{ineq-aroundinj}) is generally sharp.
	\begin{example}\label{ex-lip-sharp}
		For any $\epsilon>0$, let $(M,h)$ be a $2$-dimensional flat cone of angle $(1-\delta)\pi$, i.e.,
		$h=dr^2+r^2\frac{(1-\delta)^2}{4}d\theta^2$. Then any point $x\in M$ other than the vertex has injectivity radius $r(x)\cos \frac{\delta\pi}{2}$. Then any two points $p,q$ on a ray from the vertex satisfies that
		$$\left|\operatorname{inj}(p)-\operatorname{inj}(q)\right|=\cos \frac{\delta\pi}{2}\left|r(p)-r(q)\right|>(1-\epsilon)d(p,q),$$ for sufficient small $\delta>0$.
		Because $(M,h)$ can be made into a smooth manifold by gluing with a thin cylinder around the vertex without changing the injectivity radius of $p$ and $q$ above, this shows that the Lipschitz constant in the decay of (\ref{ineq-aroundinj}) is sharp in general.  
	\end{example}

	\bibliography{document}{}

\begin{thebibliography}{10}

\bibitem{Alexander1978Local}
Stephanie Alexander.
\newblock Local and global convexity in complete {R}iemannian manifolds.
\newblock {\em Pacific Journal of Mathematics}, 76(2):283--289, 1978.

\bibitem{Azagra2007A}
Daniel Azagra and Robb Fry.
\newblock A second order smooth variational principle on {R}iemannian
  manifolds.
\newblock {\em Canadian Journal of Mathematics}, 62(2):241--260, 2010.

\bibitem{Ber}
Marcel Berger.
\newblock {\em A Panoramic View of {R}iemannian Geometry}, volume~II.
\newblock Springer-Verlag, Berlin, 2003.

\bibitem{Bishop1977Decomposition}
Richard~L. Bishop.
\newblock Decomposition of cut loci.
\newblock {\em Proceedings of the American Mathematical Society},
  65(1):133--136, 1977.

\bibitem{IsaacChavel2006Riemannian}
Isaac Chavel.
\newblock {\em Riemannian Geometry: A Modern Introduction}.
\newblock Cambridge University Press, 2 edition, 2006.

\bibitem{ChEb}
Jeff Cheeger and David~G. Ebin.
\newblock {\em Comparison Theorems in {R}iemannian Geometry}, volume 365.
\newblock American Mathematical Soc., 2008.

\bibitem{Cheeger1982}
Jeff Cheeger, Mikhail Gromov, and Michael Taylor.
\newblock Finite propagation speed, kernel estimates for functions of the
  {L}aplace operator, and the geometry of complete {R}iemannian manifolds.
\newblock {\em J. Differential Geom.}, 17(1):15--53, 1982.

\bibitem{ChLiYau}
Siu~Yuen Cheng, Peter Li, and Shing-Tung Yau.
\newblock On the upper estimate of the heat kernel of a complete {R}iemannian
  manifold.
\newblock {\em American Journal of Mathematics}, 103(5):1021--1063, 1981.

\bibitem{Dib2}
James Dibble.
\newblock The continuity of geometric radii.
\newblock in preparation.

\bibitem{Dib}
James Dibble.
\newblock The convexity radius of a {R}iemannian manifold.
\newblock {\em Asian J. Math.}, 21(1):169--174, 2017.

\bibitem{Gromoll1968Riemannsche}
Detlef Gromoll, Wolfgang Meyer, and Wilhelm Klingenberg.
\newblock {\em {R}iemannsche {G}eometrie im {G}rossen}.
\newblock Springer, 1968.

\bibitem{Gulliver1975On}
Robert Gulliver.
\newblock On the variety of manifolds without conjugate points.
\newblock {\em Transactions of the American Mathematical Society},
  210:185--201, 1975.

\bibitem{Hosseini2013Euler}
S.~Hosseini and M.~R. Pouryayevali.
\newblock Euler characteristic of epi-{L}ipschitz subsets of {R}iemannian
  manifolds.
\newblock {\em Journal of Convex Analysis}, 20(1):67--91, 2013.

\bibitem{HuXu}
Yingxiang Hu and Shicheng Xu.
\newblock Quantitative rigidity for geodesic spheres.
\newblock preprint, 2017.

\bibitem{Innami2012The}
Nobuhiro Innami, Katsuhiro Shiohama, and Toshiro Soga.
\newblock The {C}ut loci, {C}onjugate loci and {P}oles in a complete
  {R}iemannian manifold.
\newblock {\em Geometric and Functional Analysis}, 22(5):1400--1406, 2012.

\bibitem{KARCHER1968}
Hermann Karcher.
\newblock {S}chnittort und konvexe {M}engen in vollstndigen {R}iemannschen
  {M}annigfaltigkeiten.
\newblock {\em Mathematische Annalen}, 177:105--121, 1968.

\bibitem{Karcher1970}
Hermann Karcher.
\newblock {A}nwendungen der {A}lexandrowschen {W}inkelvergleichss{\"a}tze.
\newblock {\em Manuscripta Mathematica}, 2(1):77--102, 1970.

\bibitem{Kl}
Wilhelm~PA Klingenberg.
\newblock {\em Riemannian {G}eometry}, volume~1.
\newblock Walter de Gruyter, 1995.

\bibitem{Lafontaine1987Riemannian}
J.~Lafontaine, Dominique Hulin, and Sylvestre Gallot.
\newblock {\em Riemannian {G}eometry}.
\newblock Springer-Verlag, 1987.

\bibitem{Mei}
Jiaqiang Mei.
\newblock Note on the injectivity radius estimate.
\newblock preprint, March 2015.

\bibitem{Sul}
John~J. O'Sullivan.
\newblock Manifolds without conjugate points.
\newblock {\em Mathematische Annalen}, 210(4):295--311, 1974.

\bibitem{Pe16}
Peter Petersen.
\newblock {\em Riemannian {G}eometry}, volume 171 of {\em Graduate Texts in
  Mathematics}.
\newblock Springer International Publishing, 3 edition, 2016.

\bibitem{Tanaka}
M.~Tanaka.
\newblock On the cut loci of a von {M}angoldt¡¯s surface of revolution.
\newblock {\em Journal of the Mathematical Society of Japan}, 44(4):631--641,
  1992.

\bibitem{Wh}
J.~H.~C. Whitehead.
\newblock Convex regions in the geometry of paths.
\newblock {\em The Quarterly Journal of Mathematics}, 3(1):33--42, 1932.

\bibitem{Wolter1979Distance}
Franz~Erich Wolter.
\newblock {D}istance function and cut loci on a complete {R}iemannian manifold.
\newblock {\em Archiv der Mathematik}, 32(1):92--96, 1979.

\bibitem{Xu2016On}
Shicheng Xu.
\newblock On conjugate points and geodesic loops in a complete {R}iemannian
  manifold.
\newblock {\em The Journal of Geometric Analysis}, 26(3):2221--2230, 2016.

\end{thebibliography}
	\bibliographystyle{plain}
	
\end{document}